\numberwithin{equation}{section}
\newtheorem{theorem}{Theorem}[section]
\newtheorem{proposition}[theorem]{Proposition}
\newtheorem{conjecture}[theorem]{Conjecture}
\newtheorem{lemma}[theorem]{Lemma}
\theoremstyle{definition}
\newtheorem{example}[theorem]{Example}
\theoremstyle{remark}
\newtheorem{remark}[theorem]{\bf\em Remark}
\newcommand{\G}{\mathrm{G}}
\newcommand{\s}{\mathrm{S}}
\newcommand{\leg}[2]{\left(\frac{#1}{#2}\right)}
\newcommand{\Tr}{\mathrm{Tr}}
\newcommand{\Sym}{\mathrm{Sym}}
\def\+{{\oplus}}
\def\Ddots{\mathinner{\mkern1mu\raise\p@
\vbox{\kern7\p@\hbox{.}}\mkern2mu
\raise4\p@\hbox{.}\mkern2mu\raise7\p@\hbox{.}\mkern1mu}}
\begin{document}
\title[Value distribution of elementary of symmetric polynomials]{Value distribution of elementary symmetric polynomials and its perturbations over finite fields}

\author{Luis A. Medina}
\address{Department of Mathematics, University of Puerto Rico, 17 Ave. Universidad STE 1701, San Juan, PR 00925}
\email{luis.medina17@upr.edu}

\author{L. Brehsner Sep\'ulveda}
\address{Department of Mathematics, University of Puerto Rico, 17 Ave. Universidad STE 1701, San Juan, PR 00925}
\email{leonid.sepulveda1@upr.edu}

\author{C\'esar A. Serna-Rapello}
\address{Department of Mathematics, University of Puerto Rico, 17 Ave. Universidad STE 1701, San Juan, PR 00925}
\email{cesar.serna@upr.edu}

% Abstract
\begin{abstract}
In this article we establish the asymptotic behavior of generating functions related to the exponential sum over finite fields of elementary symmetric functions and their perturbations.  This asymptotic behavior allows us to calculate the probability generating function of the probability that the the elementary symmetric polynomial of degree $k$ and its perturbations returns $\beta \in \mathbb{F}_q$ where $\mathbb{F}_q$ represents the field of $q$ elements.  Our study extends many of the results known for perturbations over the binary field to any finite field.  In particular, we establish when a particular perturbation is asymptotically balanced over a prime field and provide a construction to find such perturbations over any finite field.
%The concepts of asymptotically balanced elementary symmetric polynomial and asymptotically balanced perturbation are extended from the binary field to any finite fields.

\end{abstract}

% General info
\subjclass[2010]{05E05, 11T23}
\date{\today}
% Keywords
\keywords{Exponential sums, symmetric functions, value distribution}
\maketitle

{\centering\footnotesize In memory of Francis N. Castro.\par}
%%%%%%%%%%%%%%%%%%%%%%%%%%%%%%%%%%%%%%%%%%%%%%%%%%%%%%%%%%%%%%%%%%%%%%%%%%%%%%%%%%%%%%%%%%%%%%%%%%%%%%%%%%%%%%%%%%%%%%%%%%%%%%%%%%%%%%%
% Section: Introduction
%%%%%%%%%%%%%%%%%%%%%%%%%%%%%%%%%%%%%%%%%%%%%%%%%%%%%%%%%%%%%%%%%%%%%%%%%%%%%%%%%%%%%%%%%%%%%%%%%%%%%%%%%%%%%%%%%%%%%%%%%%%%%%%%%%%%%%%
\section{Introduction}

Many problems in number theory and combinatorics, as well as in their applications, can be formulated in terms of exponential sums.  In cryptography, for example, exponential sums can be used to detect when 
a particular function is balanced (a property very useful in cryptographic applications) \cite{cgm2,cm1,cm2,cm3,cms,cusick4,cusick2}.  Some classical examples of exponential sums include the number-theoretical
Gauss sums, Kloosterman sums, and Weyl sums. 

This work is based on the study of exponential sums of the following form.  Let $q=p^r$ where $p$ is prime and $r\geq 1$.  Let $F:\mathbb{F}_q^n \to \mathbb{F}_q$ be a function.  The {\em exponential sum over }$\mathbb{F}_q$ of $F$ is defined as
\begin{equation}
\label{expsumFq}
S_{\mathbb{F}_q}(F)=\sum_{{\bf x}\in\mathbb{F}_q^n}\xi_p^{\Tr(F({\bf x}))},
\end{equation}
where $\xi_p=\exp(2\pi i/p)$ and $\Tr = \Tr_{\mathbb{F}_q/\mathbb{F}_p}$ is the field trace function.  These exponential sums have been extensively studied when the characteristic of the field is 2 because 
of their cryptographic applications, see  \cite{cai,canteaut, cgm2,cm3,cms,cusick4,cusick2,mitchell}.  
Recently, some cryptographic applications when the characteristic of the field is different than 2 has been found.  This has prompted new research in exponential sums of the type (\ref{expsumFq}) and many of the results available for the binary field have been extended to other finite fields \cite{ccms,cmsep,licusick1,licusick2,llm}.  

Let $L:\mathbb{F}_q\to \mathbb{F}_q$ be a linear function and $X$ an indeterminate.
Consider the generating function given by
\begin{equation}
\label{genfundef}
 \s_{\mathbb{F}_q,L}(F;X)=\sum_{{\bf y} \in \mathbb{F}_q^n}X^{L(F({\bf y}))}.
\end{equation}
Observe that when $L=\Tr$ and $X=\xi_p$ we recover the regular exponential sum $S_{\mathbb{F}_q}(F)$.  Therefore, the study of regular exponential sums is embedded in the study of generating functions of the form (\ref{genfundef}). Thus, from now on, we consider the generating functions (\ref{genfundef}) instead of exponential sums of the form (\ref{expsumFq}).  Furthermore, in this article we use the term exponential sums to refer to both (\ref{expsumFq}) and (\ref{genfundef}).  

In \cite{cmsep}, closed formulas for exponential sums of type (\ref{expsumFq}) of elementary symmetric polynomials were found (extending the results of \cite{cai} to every finite field).  
There is a natural connection between the formulas presented in \cite{cmsep} and the value distribution of elementary symmetric polynomials over $\mathbb{F}_q$.  Part of the focus of this article is to explain 
such connection and to extend it to perturbations of elementary symmetric polynomials.  

Let $k$ be a natural number.  The elementary symmetric polynomial of degree $k$ in the variables $X_1,\ldots, X_n$ is denoted by $\boldsymbol{e}_k(X_1,\ldots, X_n)$.  Sometimes we use the more compact notation 
$\boldsymbol{e}_{n,k}$ to represent that polynomial, that is $\boldsymbol{e}_{n,k}$ also represents the $n$-variable elementary symmetric polynomial of degree $k$.
In this article, we prefer to use the notation $\boldsymbol{e}_{n,k}$ to represent the  $n$-variable elementary symmetric polynomial of degree $k$ when it has not been evaluated and the notation $\boldsymbol{e}_k({\bf x})$ when we want to stress that the elementary polynomial has been evaluated at ${\bf x}$.   

Let $F({\bf X}) \in \mathbb{F}_q[X_1,\ldots, X_j]$ ($j$ fixed). 
The polynomial $\boldsymbol{e}_{n,k}+F({\bf X})$ is called a {\it perturbation} of the $n$-variable elementary symmetric polynomial of degree $k$.   These  perturbations were introduced in \cite{cm2} for 
the binary case and are the main focus of \cite{cgm2,cm2}.  Perturbations break the symmetry of $\boldsymbol{e}_{n,k}$ and may reduce symmetry attacks in cryptographic implementations. % They also have to potential to keep the efficiency of  implementations. 

The value of $\boldsymbol{e}_{k}(\bf x)$ is important when its exponential sums are studied.  Consider the set $A=\{0,a_1,\ldots,a_s\}$, where $a_j$'s are parameters.  Suppose that ${\bf x} \in A^n$ and that $a_j$ appears $m_j$ times in ${\bf x}$.  Following \cite{cmsep}, the value of $\boldsymbol{e}_k({\bf x})$
will be denoted by $\Lambda_{a_1,\ldots, a_s}(k,m_1,\ldots, m_s)$.  In the particular case when the set $A$ is the finite field $\mathbb{F}_q$ we use the notation $\Lambda_{\mathbb{F}_q^{\times}}(k,m_1,\ldots, m_{q-1})$.
A recursive definition for $\Lambda_{a_1,\ldots, a_s}(k,m_1,\ldots, m_s)$, which allows for fast evaluations of it, appears in \cite{cmsep}: 
\begin{eqnarray}
\Lambda_{a_1}(k,m)&=&a_1^k \binom{m}{k} \\\nonumber
\Lambda_{a_1,a_2,\ldots, a_{l+1}}(k,m_1,m_2,\ldots,m_{l+1})&=&
 \sum_{j=0}^{m_{l+1}}\binom{m_{l+1}}{j}a_{l+1}^j\Lambda_{a_1,\ldots,a_l}(k-j,m_1,m_2,\ldots,m_{l}).
\end{eqnarray}
%The value of $\boldsymbol{e}_{n,k}$ is linked to $\Lambda_{a_1,\cdots, a_l}$.

As mentioned before, one of the main results of \cite{cmsep} are closed formulas for exponential sums of elementary symmetric polynomials over any finite field.  For convenience, we include their result next.  The result is written in terms of (\ref{genfundef}).

%Our general goal is to better understand the behavior of
%exponential sums. We hope this will bring understanding to many problems in areas like analytic number
%theory.

\begin{theorem}[\cite{cmsep}]
 \label{closedformsSq}
 Let $n$ and $k>1$ be positive integers, $p$ be a prime and $q=p^r$ with $r\geq 1$. Let $L:\mathbb{F}_q\to \mathbb{F}_q$ a linear function, $X$ an indeterminate  and $D=p^{\lfloor\log_p(k)\rfloor+1}$. Then,
 \begin{equation}
\label{closedformGF}
 \s_{\mathbb{F}_q,L}(\boldsymbol{e}_{n,k};X) = \sum_{j_1=0}^{D-1}\sum_{j_2=0}^{j_1}\ldots \sum_{j_{q-1}=0}^{j_{q-2}} 
 c_{j_1,\ldots,j_{q-1};L}(k;X)\left(1+\xi_D^{-j_1}+\cdots+\xi_D^{-j_{q-1}}\right)^n, 
\end{equation}
 where 
 \begin{equation*}
  c_{j_1,\ldots,j_{q-1};L}(k;X)=\frac{1}{D^{q-1}}\sum_{b_{q-1}=0}^{D-1}\ldots \sum_{b_1=0}^{D-1} X^{L\left(\Lambda_{\mathbb{F}_q^{\times}}\left(k,b_1,\ldots,b_{q-1}\right)\right)}
  \sum_{(j_1',\ldots,j_{q-1}')\in \Sym(j_1,\ldots, j_{q-1})}\xi_D^{j_1'b_{q-1}+\cdots+j_{q-1}' b_1},
 \end{equation*}
$\xi_{D}=\exp(2\pi i/D)$, and $\Sym(j_1,\ldots,j_{q-1})$ is the set of all rearrangements of $(j_1,\ldots,j_{q-1})$.
\end{theorem}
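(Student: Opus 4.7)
The plan is to partition $\mathbb{F}_q^n$ by how many times each field element appears in a tuple, reduce the resulting multinomial sum to one indexed by residues modulo $D$ via a roots-of-unity filter, and collapse the final formula onto weakly decreasing $(j_1,\ldots,j_{q-1})$ using the symmetry of the power $(1+\sum_i \xi_D^{-j_i})^n$.

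First, I would fix an ordering $a_1,\ldots,a_{q-1}$ of $\mathbb{F}_q^\times$ and group by the vector of multiplicities $(m_0,m_1,\ldots,m_{q-1})$. Every ${\bf y}\in\mathbb{F}_q^n$ in which $a_i$ appears $m_i$ times and $0$ appears $m_0=n-m_1-\cdots-m_{q-1}$ times satisfies $\boldsymbol{e}_k({\bf y})=\Lambda_{\mathbb{F}_q^\times}(k,m_1,\ldots,m_{q-1})$, and the number of such tuples is the multinomial coefficient $\binom{n}{m_0,m_1,\ldots,m_{q-1}}$. Hence
\begin{equation}
\label{eq:plan-partition}
\s_{\mathbb{F}_q,L}(\boldsymbol{e}_{n,k};X)=\sum_{\substack{m_0+\cdots+m_{q-1}=n\\ m_i\ge 0}}\binom{n}{m_0,\ldots,m_{q-1}}X^{L(\Lambda_{\mathbb{F}_q^\times}(k,m_1,\ldots,m_{q-1}))}.
\end{equation}

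The central algebraic input is that $\Lambda_{\mathbb{F}_q^\times}(k,m_1,\ldots,m_{q-1})\in\mathbb{F}_q$ depends on each $m_i$ only through $m_i\bmod D$. Expanding $\Lambda$ as a sum of products $\binom{m_1}{k_1}\cdots\binom{m_{q-1}}{k_{q-1}}a_1^{k_1}\cdots a_{q-1}^{k_{q-1}}$ over compositions $k_1+\cdots+k_{q-1}=k$, this reduces to Lucas's theorem: since $D=p^{\lfloor\log_p k\rfloor+1}$ is the smallest power of $p$ strictly greater than $k$, the base-$p$ digits of each $k_i$ vanish at all positions $\ge\log_p D$, so $\binom{m_i}{k_i}\bmod p$ depends only on $m_i\bmod D$.

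With the periodicity in hand, I would split \eqref{eq:plan-partition} by residues $b_i=m_i\bmod D$ and insert the orthogonality relation $\mathbbm{1}_{m\equiv b\bmod D}=D^{-1}\sum_{j=0}^{D-1}\xi_D^{j(m-b)}$ once for each $i=1,\ldots,q-1$. The freed inner sum over $(m_0,\ldots,m_{q-1})$ with $\sum m_i=n$ collapses by the multinomial theorem to $(1+\xi_D^{j_1}+\cdots+\xi_D^{j_{q-1}})^n$; the reindexing $j_i\mapsto -j_i$ then produces $(1+\xi_D^{-j_1}+\cdots+\xi_D^{-j_{q-1}})^n$ together with exponents $\xi_D^{j_i b_i}$ as in the statement. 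Finally, I would group the resulting sum over $(j_1,\ldots,j_{q-1})\in\{0,\ldots,D-1\}^{q-1}$ by weakly decreasing representatives $D-1\ge j_1\ge\cdots\ge j_{q-1}\ge 0$; because $(1+\sum_i\xi_D^{-j_i})^n$ is invariant under permutations of the indices, the remaining sum over permutations of $(j_1,\ldots,j_{q-1})$ lands precisely inside $c_{j_1,\ldots,j_{q-1};L}(k;X)$ (the cosmetic reversal $b_i\leftrightarrow b_{q-i}$ in the paper's expression does not affect the $b$-sum, which is a full sum over $\{0,\ldots,D-1\}^{q-1}$). The main obstacle is the Lucas-type periodicity assertion; once that is in place, the rest is a careful roots-of-unity computation and bookkeeping of permutation symmetry.
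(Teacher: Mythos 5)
The paper does not actually prove Theorem \ref{closedformsSq}; it is imported verbatim from \cite{cmsep}, so there is no in-paper argument to compare against. Your proposal follows the standard route of \cite{cai,cmsep}: partition $\mathbb{F}_q^n$ by multiplicity vectors, invoke the Lucas-type periodicity of $\Lambda_{\mathbb{F}_q^\times}(k,\cdot)$ modulo $D$ in each argument, apply the roots-of-unity filter together with the multinomial theorem, and regroup the resulting sum over $\{0,\ldots,D-1\}^{q-1}$ by weakly decreasing representatives. All of the substantive steps are correct, and the key input (that $\binom{m_i}{k_i}\bmod p$ depends only on $m_i\bmod D$ because every $k_i\le k<D$ has vanishing base-$p$ digits from position $\lfloor\log_p k\rfloor+1$ on) is exactly right.

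One justification in your final bookkeeping step is off, although the conclusion survives. You claim the reversal $b_i\leftrightarrow b_{q-i}$ in the paper's expression is harmless ``because the $b$-sum is a full sum over $\{0,\ldots,D-1\}^{q-1}$.'' Relabelling the $b_i$ alone is not an invariance of the summand: $\Lambda_{\mathbb{F}_q^\times}(k,b_1,\ldots,b_{q-1})$ is not symmetric in the multiplicities $b_i$ by themselves (the pairs $(a_i,b_i)$ must be permuted together), so the substitution $b_i\mapsto b_{q-i}$ changes the factor $X^{L(\Lambda_{\mathbb{F}_q^\times}(k,b_1,\ldots,b_{q-1}))}$. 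The correct reason the two expressions agree is that $\Sym(j_1,\ldots,j_{q-1})$ is closed under reversal (indeed under any permutation of coordinates), so for each \emph{fixed} $b$ one has $\sum_{(j')\in \Sym(j)}\xi_D^{j_1'b_{q-1}+\cdots+j_{q-1}'b_1}=\sum_{(j')\in \Sym(j)}\xi_D^{j_1'b_{1}+\cdots+j_{q-1}'b_{q-1}}$, which makes your coefficient literally equal to $c_{j_1,\ldots,j_{q-1};L}(k;X)$. With that one-line repair the argument is complete.
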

\begin{remark}
 Theorem \ref{closedformsSq} can be generalized without too much effort to linear combinations of elementary symmetric polynomials.  See \cite{cmsep} for more details.
\end{remark}

Theorem \ref{closedformsSq} is a generalization of the results presented in \cite{cai} for the binary field.  In \cite{cm1}, Castro and Medina used the closed formulas in \cite{cai} to calculate the asymptotic 
behavior of exponential sums of symmetric Boolean functions.  A similar result is now available in every finite field, that is,  Theorem \ref{closedformsSq} can be used to study the asymptotic behavior of 
exponential sums of the form $S_{\mathbb{F}_q}(\boldsymbol{e}_{n,k})$.  

%In this article, we study generating functions of the following form.  Let $F:\mathbb{F}_q^n \to \mathbb{F}_q$ be a function, $L:\mathbb{F}_q\to \mathbb{F}_q$ a linear function, and $X$ an indeterminate.
%We define $\s_{\mathbb{F}_q,L}(F;X)$ as
%\begin{equation}
%\label{genfundef}
% \s_{\mathbb{F}_q,L}(F;X)=\sum_{{\bf y} \in \mathbb{F}_q^n}X^{L(F({\bf y}))}.
%\end{equation}
%Observe that when $L=\Tr$ and $X=\xi_p$ we recover the regular exponential sum $S_{\mathbb{F}_q}(F)$.  Therefore, the study of regular exponential sums is %embedded in the study of generating functions of
%the form (\ref{genfundef}).  Furthermore, closed formulas similar to the ones presented in Theorem \ref{closedformsSq} exist for $\s_{\mathbb{F}_q,L}(\boldsymbol{e}_{n,k},X).$

The rest of the article is divided into three sections.  In the next one (Section 2) we study the asymptotic behavior of generating functions of the type (\ref{genfundef}) for elementary symmetric polynomials and their perturbations. One of the reasons to study such behavior is to explore the veracity of an open problem related to balancedness.  The results presented in Section 2 generalize the results presented in \cite{cm2} from the binary field to any finite field. In the third section, we study the distribution of the values of elementary symmetric polynomials over $\mathbb{F}_q$.  To be more precise, we study the probability that $\boldsymbol{e}_k({\bf x})$ returns $\beta \in \mathbb{F}_q$ when the entries of ${\bf x}$ are randomly selected from $\mathbb{F}_q$.  We also introduce the concept of asymptotically balanced symmetric polynomial and asymptotically balanced perturbation and show that a perturbation $\boldsymbol{e}_{n,k}+F({\bf X})$ is asymptotically balanced over $\mathbb{F}_p$ ($p$ prime) if and only if $\boldsymbol{e}_{n,k}$ is asymptotically balanced or $F({\bf X})$ is balanced over $\mathbb{F}_p$.  We also show that this statement is not true for finite fields in general and provide a way to construct counterexamples.  Finally, we finish the article with some concluding remarks.

%calculate 
%the probability generating function for the probability that the trace of the elementary symmetric polynomial of degree $k$ (and its perturbations) returns $t \in \mathbb{F}_p$ where $\mathbb{F}_p$ represents 
%the field of $p$ elements ($p$ prime).  We also generalize Theorem \ref{closedformsSq} to the setting where the trace function is removed. That generalization combined with other results in the paper allow
%us to find a formula for the distribution of the values of $\boldsymbol{e}_k({\bf x})$ when the entries of ${\bf x}$ are randomly selected from $\mathbb{F}_q$.

%\begin{theorem}[\cite{cmsep}]
%\label{genthm}
% Let $n$ and $k>1$ be positive integers, $p$ be a prime and $q=p^r$ with $r\geq 1$.  Let $D=p^{\lfloor\log_p(k)\rfloor+1}$. 
% The sequence $\{S_{\mathbb{F}_q}(\boldsymbol{e}_{n,k})\}$ satisfies the linear recurrence with integer coefficients whose characteristic polynomial is given by
% \begin{equation}
% \label{gencharpoly}
%  P_{q,k}(X)=\prod_{a_1=0}^{D-1}\,\prod_{0\leq a_2\leq a_1} \cdots \prod_{0\leq a_{q-1}\leq a_{q-2}} \left(X-(1+\xi_D^{a_1}+\cdots+\xi_D^{a_{q-1}})\right).
% \end{equation}
%\end{theorem}

%%%%%%%%%%%%%%%%%%%%%%%%%%%%%%%%%%%%%%%%%%%%%%%%%%%%%%%%%%%%%%%%%%%%%%%%%%%%%%%%%%%%%%%%%%%%%%%%%%%%%%%%%%%%%%%%%%%%%%%%%%%%%%%%%%%%%%%
% Section: Asymptotic behavior of elementary symmetric polynomials and their perturbations
%%%%%%%%%%%%%%%%%%%%%%%%%%%%%%%%%%%%%%%%%%%%%%%%%%%%%%%%%%%%%%%%%%%%%%%%%%%%%%%%%%%%%%%%%%%%%%%%%%%%%%%%%%%%%%%%%%%%%%%%%%%%%%%%%%%%%%%
\section{Asymptotic behavior of elementary symmetric polynomials and their perturbations}
\label{asympsec}

A function $F:\mathbb{F}_q\to\mathbb{F}_q$ is said to be {\it balanced} if its values are equally distributed.  That is, if $F$ takes each value of $\mathbb{F}_q$ exactly $q^{n-1}$ times.  Balancedness is important in some cryptographic implementations.  That is especially true when the characteristic of the field is 2.

There is an important conjecture proposed by Cusick, Li and St$\check{\mbox{a}}$nic$\check{\mbox{a}}$ about the balancedness of elementary symmetric polynomials over the binary field \cite{cusick2}.  Their conjecture states:

\begin{conjecture}[\cite{cusick2}]
\label{clsconj}
 There are no nonlinear balanced elementary symmetric Boolean functions except for degree $k=2^\ell$ and $2^{\ell+1}D-1$-variables, where $\ell,D$ are positive integers.
\end{conjecture}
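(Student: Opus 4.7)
The plan is to specialize Theorem \ref{closedformsSq} to $q=2$, $L=\Tr=\mathrm{id}$, $X=-1$, so that balancedness of $\boldsymbol{e}_{n,k}$ over $\mathbb{F}_2$ becomes
\begin{equation*}
S_{\mathbb{F}_2}(\boldsymbol{e}_{n,k}) = \sum_{j=0}^{D-1} c_j(k)\,(1+\xi_D^{-j})^n = 0, \qquad c_j(k) = \frac{1}{D}\sum_{b=0}^{D-1}(-1)^{\binom{b}{k}}\xi_D^{jb},
\end{equation*}
with $D=2^{\lfloor\log_2 k\rfloor+1}$. My first step is to compute $c_0(k) = \frac{1}{D}\sum_{b=0}^{D-1}(-1)^{\binom{b}{k}}$ using Lucas' theorem: writing $k=2^{i_1}+\cdots+2^{i_s}$ with $i_1>\cdots>i_s$, Lucas says $\binom{b}{k}$ is odd precisely when the bit pattern of $k$ is contained in that of $b$. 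Counting such $b$ in $[0,2^{i_1+1})$ yields $c_0(k) = 1-2^{1-s}$, which equals $0$ if and only if $s=1$, i.e., iff $k$ is a power of $2$.

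Since $|1+\xi_D^{-j}|<2$ for $j\neq 0$, the $j=0$ term dominates and $S_{\mathbb{F}_2}(\boldsymbol{e}_{n,k})\sim c_0(k)\,2^n$. Hence for nonlinear $k$ (i.e., $k\geq 2$) that is not a power of $2$, the exponential sum is nonzero for all sufficiently large $n$, ruling out balancedness except possibly at a finite list of small $n$. For $k=2^\ell$ with $\ell\geq 1$, where $c_0(k)=0$ and $D=2^{\ell+1}$, I would pair conjugate indices $j,\,D-j$, write $1+\xi_D^{-j} = 2\cos(\pi j/D)\,e^{-i\pi j/D}$, and reduce vanishing of the exponential sum to a trigonometric identity in $n$ modulo $2^{\ell+1}$. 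The identity should hold precisely when $n\equiv -1\pmod{2^{\ell+1}}$, which is exactly the prescribed form $n=2^{\ell+1}D-1$.

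The main obstacle is the non-asymptotic regime for $k$ not a power of $2$: the dominance argument only excludes balancedness for $n$ larger than some threshold that depends on $k$. Closing this gap requires sharp effective bounds on $\sum_{j\neq 0}c_j(k)(1+\xi_D^{-j})^n$ uniformly in $k$, precluding any exact cancellation with $c_0(k)\,2^n$ at small $n$. The $c_j(k)$ are algebraic integers in $\mathbb{Z}[\xi_D]$ arising as discrete Fourier coefficients of $b\mapsto\binom{b}{k}\bmod 2$, and controlling them beyond the trivial bound $|c_j(k)|\leq 1$ appears to be where the Cusick--Li--St\u{a}nic\u{a} conjecture becomes genuinely hard and, to my knowledge, remains open; accordingly, I would only expect this strategy to yield the asymptotic form of the statement in full generality, together with the complete characterization when $k$ is a power of $2$.
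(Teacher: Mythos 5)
The statement you are asked to prove is labeled as a \emph{conjecture} in the paper, and the paper does not prove it: it only records (with citations to \cite{cm1,fine,ggz}) that the conjecture is known to hold \emph{asymptotically}, i.e.\ for all sufficiently large $n$ once $k$ is fixed. So there is no ``paper proof'' to compare against, and no complete proof is currently known. Your proposal is, in substance, a correct reconstruction of the known asymptotic argument of Castro and Medina \cite{cm1}: specializing Theorem \ref{closedformsSq} to $q=2$, $X=-1$ gives $S_{\mathbb{F}_2}(\boldsymbol{e}_{n,k})=\sum_{j=0}^{D-1}c_j(k)(1+\xi_D^{-j})^n$ with $c_j(k)=\frac{1}{D}\sum_{b=0}^{D-1}(-1)^{\binom{b}{k}}\xi_D^{jb}$, your Lucas-theorem count giving $c_0(k)=1-2^{1-s}$ is right, and the dominance of the $j=0$ term (this is exactly the paper's equation (\ref{asympbehk}) with $q=2$) shows $\boldsymbol{e}_{n,k}$ is asymptotically balanced iff $k$ is a power of $2$. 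Your treatment of the case $k=2^\ell$, reducing the vanishing of the sum to the congruence $n\equiv -1\pmod{2^{\ell+1}}$, also matches the known exact result for that case.

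The genuine gap is the one you yourself identify at the end: when $k$ is not a power of $2$, the dominance argument only rules out balancedness for $n$ beyond a threshold depending on $k$, and controlling the lower-order terms $\sum_{j\neq 0}c_j(k)(1+\xi_D^{-j})^n$ sharply enough to exclude exact cancellation at small $n$ is precisely where the Cusick--Li--St\u{a}nic\u{a} conjecture remains open. So your write-up should not be presented as a proof of the conjecture; it is a proof of its asymptotic form plus the complete characterization for $k=2^\ell$, which is exactly the state of the art that the paper relies on. Two small points of care: you should state explicitly that balancedness over $\mathbb{F}_2$ is equivalent to $S_{\mathbb{F}_2}(\boldsymbol{e}_{n,k})=0$ (this is what licenses the whole reduction), and in the $k=2^\ell$ case you should verify that the coefficients $c_j(2^\ell)$ vanish for all even $j$ (they do, since $(-1)^{\binom{b}{2^\ell}}$ has period $2^{\ell+1}$ and is antiperiodic with antiperiod $2^\ell$ on $[0,D)$), which is what makes the remaining trigonometric identity tractable.
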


\noindent
A generalized version of this conjecture for finite fields was presented in \cite{acgmr}.
\begin{conjecture}[\cite{acgmr}]
\label{clsconjFq}
 The only nonlinear balanced elementary symmetric polynomial over $\mathbb{F}_q$, $q=p^r$ are those with degree $k=p^\ell$ and $n=p^\ell D-1$ variables, where $\ell, D\in \mathbb{N}$, $D\not\equiv 1\mod p$.
\end{conjecture}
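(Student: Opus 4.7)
The plan is to split Conjecture \ref{clsconjFq} into its two natural halves — sufficiency (the listed pairs $(n,k)$ really do give balanced polynomials) and necessity (no others do) — and to route both through Theorem \ref{closedformsSq}. Balancedness of $\boldsymbol{e}_{n,k}$ over $\mathbb{F}_q$ is equivalent to the vanishing of $S_{\mathbb{F}_q}(c\,\boldsymbol{e}_{n,k})$ for every $c\in\mathbb{F}_q^{\times}$, which in the notation of the excerpt is exactly $\s_{\mathbb{F}_q,L_c}(\boldsymbol{e}_{n,k};\xi_p)=0$ for the linear map $L_c(y)=\Tr(cy)$. Both directions are thus questions about the explicit expansion provided by Theorem \ref{closedformsSq}.

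For sufficiency, I would specialize the recursion for $\Lambda_{\mathbb{F}_q^{\times}}(k,m_1,\ldots,m_{q-1})$ to $k=p^{\ell}$. By Lucas' theorem the binomial coefficients that appear collapse modulo $p$, and as a function on $\mathbb{F}_q^{n}$ the polynomial $\boldsymbol{e}_{n,p^{\ell}}$ reduces to a linear combination of Frobenius-twisted coordinate sums. Plugging this simplification into Theorem \ref{closedformsSq} with $n=p^{\ell}D-1$ should cause every non-unit factor $(1+\xi_D^{-j_1}+\cdots+\xi_D^{-j_{q-1}})^{n}$ to contribute zero after being paired against its coefficient $c_{j_1,\ldots,j_{q-1};L_c}(k;\xi_p)$, and the hypothesis $D\not\equiv 1\pmod{p}$ should emerge as the precise arithmetic condition eliminating the remaining obstruction.

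For necessity, I would exploit Theorem \ref{closedformsSq} asymptotically in $n$. Writing
\[
\s_{\mathbb{F}_q,L_c}(\boldsymbol{e}_{n,k};\xi_p)=\sum_{\mathbf{j}} c_{\mathbf{j};L_c}(k;\xi_p)\,\alpha_{\mathbf{j}}^{n}, \qquad \alpha_{\mathbf{j}}=1+\xi_D^{-j_1}+\cdots+\xi_D^{-j_{q-1}},
\]
balancedness forces this linear combination of $n$-th powers of algebraic integers to vanish for every $c\in\mathbb{F}_q^{\times}$ and every permissible $n$. A Vandermonde/linear-independence argument then forces the coefficient of each $\alpha_{\mathbf{j}}$ of maximal modulus to vanish. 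The dominant term corresponds to $\mathbf{j}=\mathbf{0}$, where $\alpha=q$; unwinding the inner character sum defining $c_{\mathbf{0};L_c}$ should translate its vanishing into a congruence identity on $\Lambda_{\mathbb{F}_q^{\times}}(k,b_1,\ldots,b_{q-1})$ modulo $p$ that is satisfied exactly when $k$ is a power of $p$. Once $k$ is so constrained, the sub-dominant terms would impose the divisibility condition $n\equiv -1\pmod{p^{\ell}}$ together with the exclusion $D\not\equiv 1\pmod{p}$.

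The main obstacle is that the statement is an open conjecture, and the difficulty is concentrated in the necessity direction. Even with the explicit closed form of Theorem \ref{closedformsSq} and the recursion for $\Lambda_{\mathbb{F}_q^{\times}}$ in hand, showing simultaneous vanishing of the coefficient sums $c_{\mathbf{j};L_c}(k;\xi_p)$ for all $c\in\mathbb{F}_q^{\times}$ requires delicate $p$-adic control over $\Lambda_{\mathbb{F}_q^{\times}}(k,b_1,\ldots,b_{q-1})$ when $k$ is not a prime power, and no uniform technique for this appears to be known. A reasonable partial program, and the one the present paper pursues in Section \ref{asympsec}, is to first attack \emph{asymptotic} balancedness — proving that for each fixed $k\neq p^{\ell}$ only finitely many $n$ can give a balanced $\boldsymbol{e}_{n,k}$ — which is precisely the route that succeeded over $\mathbb{F}_2$ in \cite{cm2}.
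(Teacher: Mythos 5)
The statement you have been asked to prove is an open conjecture quoted from \cite{acgmr}; the paper supplies no proof of it and only uses its asymptotic analysis to explore its plausibility, so there is no ``paper proof'' to match. You correctly acknowledge this at the end, but the program you sketch for the necessity direction contains a step that the paper itself shows to be false. You assert that the vanishing of the dominant coefficient $c_{\mathbf{0};L_c}(k;\xi_p)$ should ``translate into a congruence identity \ldots satisfied exactly when $k$ is a power of $p$.'' Section 3 records Fine's result that over a prime field $\mathbb{F}_p$ one has $\mathbbm{p}^{(p)}_{k}(t)=1/p$ for all $t$ whenever $k=d\cdot p^{\ell}$ with $1\leq d\leq p-1$; equivalently, the dominant coefficient vanishes for all such $k$, not only for $k=p^{\ell}$. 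The authors state explicitly that for this reason the approach of \cite{cm1} ``will fail to prove the conjecture asymptotically when $k=d\cdot p^{l}$ and $1<d\leq p-1$.'' Over non-prime fields the situation is worse: the concluding remarks exhibit $\boldsymbol{e}_{n,3}$ over $\mathbb{F}_8$ as asymptotically balanced even though $3$ is not of the form $d\cdot 2^{\ell}$ with $1\leq d\leq 1$, so the set of degrees annihilating the leading term is not even the one Fine's theorem describes. Consequently no Vandermonde or dominant-term argument applied to the closed form of Theorem \ref{closedformsSq} can isolate $k=p^{\ell}$.

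Two further gaps. First, balancedness is a property of a single pair $(n,k)$, so it does not force the sequence $\s_{\mathbb{F}_q,L_c}(\boldsymbol{e}_{n,k};\xi_p)$ to vanish for every $n$; the asymptotic method can only show that a nonzero limiting coefficient excludes all sufficiently large $n$, leaving finitely many unexamined cases for each $k$ even in the favorable situations. Second, the sufficiency half --- that the listed pairs $(n,k)=(p^{\ell}D-1,\,p^{\ell})$ genuinely are balanced --- is an exact statement at finite $n$ and cannot be extracted from the $n\to\infty$ limit at all; it would require controlling every subdominant term $\alpha_{\mathbf{j}}^{\,n}$ in the closed formula simultaneously, which neither your sketch nor the paper carries out.
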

It is known that Conjecture \ref{clsconj} is true asymptotically \cite{cm1,fine,ggz}.  In particular, the argument presented in \cite{cm1} depends on a calculation of the asymptotic behavior of the exponential sum $S_{\mathbb{F}_2}(\boldsymbol{e}_{n,k})$.  Thus, to explore Conjecture \ref{clsconjFq}, it is natural to study the asymptotic behavior of $\s_{\mathbb{F}_q,L}(\boldsymbol{e}_{n,k};X)$.  Theorem \ref{closedformsSq} can be used to do that.% of exponential sums of elementary symmetric polynomials over finite fields.  %The idea is to study the limit of $\s_{\mathbb{F}_q,L}(\boldsymbol{e}_{n,k};X)/q^n$ as $n$ goes to infinity.  

Consider the closed formula (\ref{closedformGF}) for $\s_{\mathbb{F}_q,L}(\boldsymbol{e}_{n,k},X)$.  Observe that $q$ is the biggest modulus of all complex numbers 
of the form $$1+\xi_D^{-j_1}+\cdots+\xi_D^{-j_{q-1}}, \text{ for }0\leq j_{q-1}\leq j_{q-2}\leq\cdots\leq j_1\leq D-1.$$  This maximum modulus is achieved if and only if $j_1=\cdots=j_{q-1} = 0$.  This implies that
\begin{eqnarray}
\label{asympbehk}
 \lim_{n\to\infty} \frac{1}{q^n} \s_{\mathbb{F}_q,L}(\boldsymbol{e}_{n,k},X) &=& c_{0,\ldots, 0;L}(k;X)\\ \nonumber
 &=&\frac{1}{D^{q-1}}\sum_{b_{q-1}=0}^{D-1}\cdots \sum_{b_1=0}^{D-1} X^{L\left(\Lambda_{\mathbb{F}_q^\times}^{(p)}\left(k,b_1,\ldots,b_{q-1}\right)\right)}.
\end{eqnarray}
Therefore, the asymptotic behavior of $\s_{\mathbb{F}_q,L}(\boldsymbol{e}_{n,k},X)$ is dominated by $c_{0,\ldots, 0}(k;X)\cdot q^n$.  We relabel $c_{0,\ldots,0;L}(k;X)$ as $c_{A,L}^{(q)}(k;X)$, i.e. 
\begin{equation}
\label{AC}
 c^{(q)}_{A,L}(k;X) = \frac{1}{D^{q-1}}\sum_{b_{q-1}=0}^{D-1}\cdots \sum_{b_1=0}^{D-1} X^{L\left(\Lambda_{\mathbb{F}_q^\times}\left(k,b_1,\ldots,b_{q-1}\right)\right)}.
\end{equation}
This is done in order to stress that (\ref{AC}) is the asymptotic coefficient for $\boldsymbol{e}_{n,k}$ over $\mathbb{F}_q$.  Observe that the value of $c^{(q)}_{A,L}(k;X)$ depends on knowing how many times, for each $\alpha\in \mathbb{F}_q$, $\Lambda_{\mathbb{F}_q^\times}^{(p)}\left(k,b_1,\ldots,b_{q-1}\right)=\alpha$ in a $q-1$-hypercube of side length $D$.  That is a very interesting combinatorial problem on its own.  For example, if we consider $q=3$, $k=27$ and $L(x)=x$, and color a point in the $81\times 81$ grid $\{(a,b)\,:\,0\leq a,b\leq 80\}$ blue if $\Lambda_{\mathbb{F}_3^\times}\left(27,a,b\right)=0$, red if $\Lambda_{\mathbb{F}_3^\times}\left(27,a,b\right)=1$ and green if $\Lambda_{\mathbb{F}_3^\times}\left(27,a,b\right)=2$, then we get the following picture.

\begin{figure}[h!]
\centering
\includegraphics[width=2.5in]{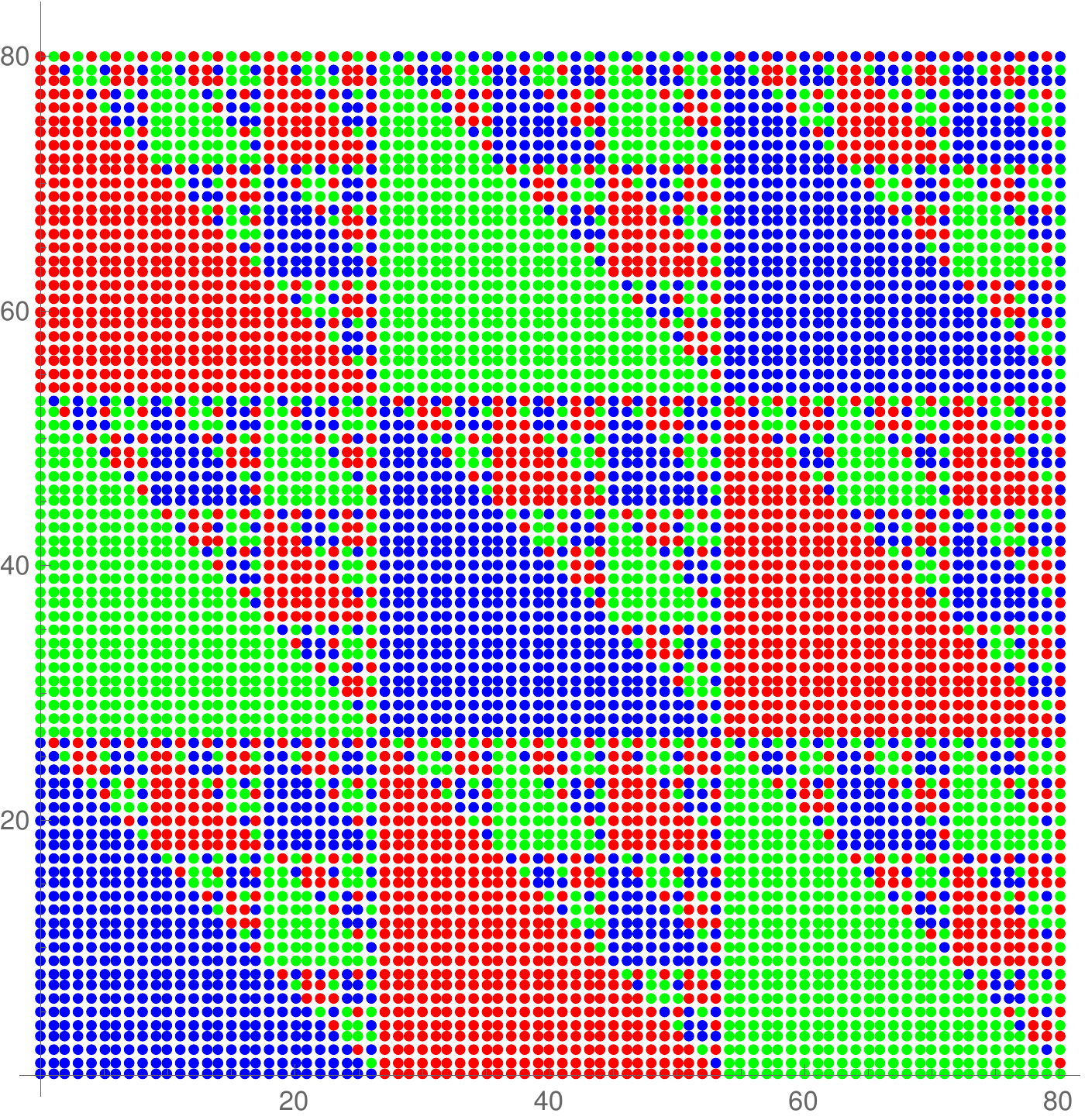}
\caption{Graphical representation of the values of $\Lambda_{\mathbb{F}_3^\times}\left(27,a,b\right)$ for $0\leq a,b\leq 80$.}
\label{badka2i3}
\end{figure}

The above argument can be easily extended to linear combinations of elementary symmetric polynomials.  Let $0<k_1<\cdots <k_s$ be integers and $\boldsymbol{k}=(k_1,\ldots, k_s)$.  
Let $\beta_1,\ldots, \beta_s \in \mathbb{F}_q$ and $\boldsymbol{\beta}=(\beta_1,\ldots,\beta_s)$.  Finally, let $D=p^{\lfloor\log_p(k_s)\rfloor+1}$.  Then,
 \begin{equation}
\label{asympbeh}
  \lim_{n\to\infty} \frac{1}{q^n} \s_{\mathbb{F}_q,L}\left(\sum_{t=1}^s \beta_t \boldsymbol{e}_{n,k_t};X\right) = c_{A,L}^{(q)}(\boldsymbol{k},\boldsymbol{\beta};X)
 \end{equation}
where
\begin{equation}
 c_{A,L}^{(q)}(\boldsymbol{k},\boldsymbol{\beta};X) = \frac{1}{D^{q-1}}\sum_{b_{q-1}=0}^{D-1}\cdots \sum_{b_{1}=0}^{D-1} X^{L\left(\sum_{t=0}^s \beta_t\Lambda_{\mathbb{F}_q^\times}(k_t,b_1,\ldots, b_{q-1})\right)}.
\end{equation}
%\end{proposition}

%The conjecture of Cusick, Li and St$\check{\mbox{a}}$nic$\check{\mbox{a}}$ has been generalized to every finite fields (see \cite{acgmr}).  In particular, it has been conjectured that the only nonlinear balanced elementary 
%symmetric Boolean functions over $\mathbb{F}_q$, $q = p^r$, are those with degree $k = p^l$ and $n = p^lD -1$ variables, where $l, D \in \mathbb{N}$, $D \not\equiv 1 \mod p$, see \cite{acgmr}.  
%Proposition \ref{asympbeh} could be used to partially prove asymptotically this conjecture when $q=p$ in the following way.  Observe that if we show that $c_A^{(p)}(k) \neq 0$, then in view of Proposition \ref{asympbeh},  $\boldsymbol{e}_{n,k}$ is never balanced for all sufficiently large $n$.  Therefore, attention must be paid to the cases when $c_A(k)\neq 0$. The work of Fine [needs citation] shows that $c_A^{(p)}(k)=0$ if $k=m p^l$ where $1\leq m \leq p-1$.  Therefore, the best
%that can be done asymptotically for this conjecture under this approach is to show that $c_A^{(p)}(k)\neq 0$ for $k\neq m p^l$ where $1\leq m \leq p-1$. 

Many of the results presented in \cite{cmsep} and the one presented so far can be extended to perturbations of elementary symmetric polynomials.   They follow from the fact that if
$F({\bf Y}) \in \mathbb{F}_q[Y_1,\cdots, Y_j]$ ($j$ fixed), then
\begin{equation}
\label{pertasreg}
\s_{\mathbb{F}_q,L}(\boldsymbol{e}_{n,k}+F({\bf Y});X) = \sum_{\boldsymbol{\beta} \in \mathbb{F}_q^j}X^{L(F(\boldsymbol{\beta}))} \s_{\mathbb{F}_q,L}\left(\sum_{m=0}^j \boldsymbol{e}_m(\boldsymbol{\beta})\boldsymbol{e}_{n-j,k-m};X\right),
\end{equation}
which is a consequence of the identity
%The next result relates the values of 
%$\s_{\mathbb{F}_q,L}\left(\boldsymbol{e}_{n,k}+F({\bf Y});X\right)$ with exponential sums of linear combinations of elementary polynomials.
%\begin{theorem}
%\label{pertasreg}
%Let $q=p^r$ with $p$ prime and $r\geq 1$.  Suppose that $L:\mathbb{F}_q\to \mathbb{F}_q$ is a linear function and $F({\bf Y})\in \mathbb{F}_q[Y_1,\ldots, Y_j]$ ($j$ fixed).  Then, if $n$ and $k$ are natural 
%numbers,
%\begin{equation}
%\s_{\mathbb{F}_q,L}(\boldsymbol{e}_{n,k}+F({\bf Y});X) = \sum_{\boldsymbol{\beta} \in \mathbb{F}_q^j}X^{L(F(\boldsymbol{\beta}))} \s_{\mathbb{F}_q,L}\left(\sum_{m=0}^j \boldsymbol{e}_m(\boldsymbol{\beta})\boldsymbol{e}_{n-j,k-m};X\right).
%\end{equation}
%\end{theorem}
%\begin{proof}
%Recall that
%\begin{equation}
%\s_{\mathbb{F}_q,L}(\boldsymbol{e}_{n,k}+F({\bf Y});X) = \sum_{{\bf y}\in \mathbb{F}_q^n}X^{L(\boldsymbol{e}_k({\bf y})+F({\bf y}))}.
%\end{equation}
\begin{equation}
\boldsymbol{e}_k(Y_1,\ldots, Y_n) = \sum_{m=0}^j \boldsymbol{e}_m(Y_1,\ldots, Y_j)\boldsymbol{e}_{k-m}(Y_{j+1},Y_{j+2},\ldots, Y_n).
\end{equation}
%implies
%\begin{eqnarray}
%\s_{\mathbb{F}_q,L}(\boldsymbol{e}_{n,k}+F({\bf Y});X) %&=& \sum_{ (\beta_1,\cdots,\beta_j,x_{j+1},\cdots, x_n)\in \mathbb{F}_q^n}\xi_p^{\Tr(\boldsymbol{e}_k({\bf x})+F({\bf x}))}\\\nonumber
%&=& \sum_{ (\beta_1,\ldots,\beta_j,y_{j+1},\ldots, y_n)\in \mathbb{F}_q^n}X^{L(F(\beta_1,\ldots, \beta_j))}X^{L(\boldsymbol{e}_k(\beta_1,\ldots,\beta_j,y_{j+1},\ldots, y_n))}\\\nonumber
%&=&\sum_{\boldsymbol{\beta}\in \mathbb{F}_q^j}X^{L(F(\boldsymbol{\beta}))}\sum_{(y_{j+1},\ldots,y_n)\in \mathbb{F}_q^{n-j}}X^{L\left(\sum_{m=0}^j\boldsymbol{e}_m(\boldsymbol{\beta})\boldsymbol{e}_{k-m}(y_{j+1},\ldots, y_n)\right)}\\\nonumber
%&=& \sum_{\boldsymbol{\beta}\in \mathbb{F}_q^j}X^{L(F(\boldsymbol{\beta}))}\s_{\mathbb{F}_q,L}\left(\sum_{m=0}^j\boldsymbol{e}_m(\boldsymbol{\beta})\boldsymbol{e}_{n-j,k-m};X\right).
%\end{eqnarray}
%\end{proof}
Observe that a corollary of (\ref{pertasreg}) is that exponential sums of perturbations of $\boldsymbol{e}_{n,k}$ have closed formulas similar to (\ref{closedformGF}).  Moreover, it is also true that 
they satisfy the linear recurrences presented in \cite{cmsep}.  This implies that a result similar to (\ref{asympbeh}) is expected.  The next two lemmas are going to be used to prove such claim.

\begin{lemma}
\label{lambdarel}
Consider the set $A=\{0,a_1,\ldots, a_s\}$ where $a_j$ are parameters.  Let $m_1,\ldots, m_s,l_1,\ldots, l_s$ be non-negative integers.  Suppose that $j\geq m_1+\cdots+m_s$.  Then,
\begin{equation}
\sum_{m=0}^j \Lambda_{a_1,\cdots, a_s}(m,m_1,\ldots,m_s)\Lambda_{a_1,\ldots, a_s}(k-m,l_1,\ldots,l_s) = \Lambda_{a_1,\ldots, a_s}(k,m_1+l_1,\ldots,m_s+l_s).
\end{equation}
\end{lemma}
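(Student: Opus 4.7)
The plan is to reduce the identity to the standard splitting formula for elementary symmetric polynomials applied to a concatenated tuple, and then identify the pieces with the appropriate $\Lambda$'s.

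First, I would reinterpret the quantity $\Lambda_{a_1,\ldots,a_s}(k,m_1,\ldots,m_s)$ as $\boldsymbol{e}_k$ evaluated on a tuple $\mathbf{x}$ containing $a_i$ with multiplicity $m_i$ (zeros may be appended, but they contribute nothing to $\boldsymbol{e}_k$ and so do not affect the value). Thus, up to the harmless insertion of zero entries, $\Lambda_{a_1,\ldots,a_s}(k,m_1,\ldots,m_s) = \boldsymbol{e}_k(\underbrace{a_1,\ldots,a_1}_{m_1},\ldots,\underbrace{a_s,\ldots,a_s}_{m_s})$.

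Next, let $\mathbf{u}$ be the tuple in which $a_i$ appears $m_i$ times, and let $\mathbf{v}$ be the tuple in which $a_i$ appears $l_i$ times. Then the concatenation $(\mathbf{u},\mathbf{v})$ is (a permutation of) the tuple in which $a_i$ appears $m_i+l_i$ times. Invariance of $\boldsymbol{e}_k$ under permutations together with the splitting identity displayed just before the lemma gives
\begin{equation*}
\boldsymbol{e}_k(\mathbf{u},\mathbf{v}) = \sum_{m=0}^{|\mathbf{u}|} \boldsymbol{e}_m(\mathbf{u})\,\boldsymbol{e}_{k-m}(\mathbf{v}),
\end{equation*}
where $|\mathbf{u}| = m_1+\cdots+m_s$. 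Translating back through the reinterpretation above, the left-hand side is $\Lambda_{a_1,\ldots,a_s}(k,m_1+l_1,\ldots,m_s+l_s)$ and the summand equals $\Lambda_{a_1,\ldots,a_s}(m,m_1,\ldots,m_s)\,\Lambda_{a_1,\ldots,a_s}(k-m,l_1,\ldots,l_s)$.

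Finally, I would deal with the upper limit. The hypothesis $j \geq m_1+\cdots+m_s = |\mathbf{u}|$ ensures that extending the sum from $m=|\mathbf{u}|$ up to $m=j$ only appends terms with $\boldsymbol{e}_m(\mathbf{u})=0$ (since $\boldsymbol{e}_m$ vanishes whenever $m$ exceeds the number of entries), and hence leaves the value unchanged. This yields the claimed identity. No real obstacle is expected; the only point requiring a little care is justifying that extra entries equal to $0$ (implicit in the $\Lambda$ notation) and the extended range of summation both contribute nothing.
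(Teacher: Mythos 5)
Your proof is correct and follows essentially the same route as the paper's: both arguments reduce the identity to the splitting formula $\boldsymbol{e}_k(\mathbf{u},\mathbf{v})=\sum_m \boldsymbol{e}_m(\mathbf{u})\boldsymbol{e}_{k-m}(\mathbf{v})$ together with the interpretation of $\Lambda_{a_1,\ldots,a_s}$ as an evaluation of an elementary symmetric polynomial. You simply spell out more explicitly the two points the paper leaves implicit, namely that appended zero entries do not change the value and that the terms with $|\mathbf{u}|<m\leq j$ vanish.
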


\begin{proof}
This is a natural consequence of the equation
\begin{equation}
\boldsymbol{e}_k(X_1,\ldots, X_n) = \sum_{m=0}^j \boldsymbol{e}_m(X_1,\ldots, X_j)\boldsymbol{e}_{k-m}(X_{j+1},X_{j+2},\ldots, X_n)
\end{equation}
and the fact that if ${\bf x} \in A^n$ is such that $a_i$ appears $n_i$ times in ${\bf x}$, then $\boldsymbol{e}_k({\bf x}) =  \Lambda_{a_1,\ldots, a_s}(m,n_1,\ldots,n_s)$.  Observe that $j$ must be 
bigger than or equal to $m_1+\cdots+m_s$ in order to have the necessary amount of variables to support $m_1+\cdots+m_s$ values.
\end{proof}

\begin{lemma}
Let $k$ be a positive integer.  Suppose that $\beta_1,\ldots, \beta_j \in \mathbb{F}_q$ where $q=p^r$ with $p$ prime and that $L:\mathbb{F}_q\to \mathbb{F}_q$ is a linear function.  Then,
\begin{equation}
\lim_{n\to \infty} \frac{1}{q^{n}} \s_{\mathbb{F}_q,L}\left(\sum_{m=0}^j \boldsymbol{e}_m(\beta_1,\ldots, \beta_j)\boldsymbol{e}_{n,k-m};X\right) = 
\lim_{n\to \infty}\frac{1}{q^n} \s_{\mathbb{F}_q,L}(\boldsymbol{e}_{n,k};X) = c_{A,L}^{(q)}(k;X).
\end{equation}
\end{lemma}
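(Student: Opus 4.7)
The plan is to reduce this limit to an instance of the already-established formula (\ref{asympbeh}) for linear combinations of elementary symmetric polynomials and then identify the resulting coefficient with $c_{A,L}^{(q)}(k;X)$ via Lemma \ref{lambdarel} together with Lucas-type periodicity.

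First I would apply (\ref{asympbeh}) directly with $s=j+1$, degrees $k_t=k-m$ for $m=0,\ldots,j$, and coefficients $\beta_t=\boldsymbol{e}_m(\beta_1,\ldots,\beta_j)$. Since the largest of these degrees is $k$, the quantity $D$ in (\ref{asympbeh}) coincides with $D=p^{\lfloor\log_p k\rfloor+1}$ used in $c_{A,L}^{(q)}(k;X)$. The limit therefore equals
\[
\frac{1}{D^{q-1}}\sum_{b_{q-1}=0}^{D-1}\cdots\sum_{b_1=0}^{D-1} X^{L\left(\sum_{m=0}^{j}\boldsymbol{e}_m(\beta_1,\ldots,\beta_j)\,\Lambda_{\mathbb{F}_q^\times}(k-m,b_1,\ldots,b_{q-1})\right)}.
\]

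Next I would invoke Lemma \ref{lambdarel}. Let $n_i$ denote the number of occurrences of the $i$-th nonzero element $\alpha_i\in\mathbb{F}_q^\times$ in the tuple $(\beta_1,\ldots,\beta_j)$, so that $n_1+\cdots+n_{q-1}\leq j$ and $\boldsymbol{e}_m(\beta_1,\ldots,\beta_j)=\Lambda_{\mathbb{F}_q^\times}(m,n_1,\ldots,n_{q-1})$. Lemma \ref{lambdarel} (applied with $A=\mathbb{F}_q^\times\cup\{0\}$, $(m_1,\ldots,m_{q-1})=(n_1,\ldots,n_{q-1})$, and $(l_1,\ldots,l_{q-1})=(b_1,\ldots,b_{q-1})$) collapses the inner sum to $\Lambda_{\mathbb{F}_q^\times}(k,n_1+b_1,\ldots,n_{q-1}+b_{q-1})$. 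Thus the limit becomes
\[
\frac{1}{D^{q-1}}\sum_{b_{q-1}=0}^{D-1}\cdots\sum_{b_1=0}^{D-1} X^{L\left(\Lambda_{\mathbb{F}_q^\times}(k,n_1+b_1,\ldots,n_{q-1}+b_{q-1})\right)}.
\]

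The remaining task is to see that the shifts by $n_i$ can be removed, and here I would use the periodicity of $\Lambda_{\mathbb{F}_q^\times}(k,\,\cdot\,)$ modulo $D$ in each entry. Expanding,
\[
\Lambda_{\mathbb{F}_q^\times}(k,m_1,\ldots,m_{q-1})=\sum_{r_1+\cdots+r_{q-1}=k}\binom{m_1}{r_1}\cdots\binom{m_{q-1}}{r_{q-1}}\alpha_1^{r_1}\cdots\alpha_{q-1}^{r_{q-1}},
\]
and every $r_i$ is at most $k<D$. By Lucas' theorem, for any fixed $r<D$ the value of $\binom{m}{r}\bmod p$ depends only on $m\bmod D$; since $\mathbb{F}_q$ has characteristic $p$, the same is true for $\binom{m}{r}$ viewed in $\mathbb{F}_q$. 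Consequently each $\Lambda_{\mathbb{F}_q^\times}(k,m_1,\ldots,m_{q-1})$ is $D$-periodic in every $m_i$, so summing $b_i$ from $n_i$ to $n_i+D-1$ is the same as summing from $0$ to $D-1$. This reindexing yields exactly $c_{A,L}^{(q)}(k;X)$, completing the argument.

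The main obstacle is the last step: one must be careful that the $D$ that appears in the closed formula for the linear combination is still the correct period and that Lucas' periodicity truly lifts from $\mathbb{F}_p$ to $\mathbb{F}_q$. Both points are clean once one checks that $k-m\le k<D$ for all $m\ge 0$ and that the binomial coefficients enter $\Lambda_{\mathbb{F}_q^\times}$ only through their images in the prime subfield.
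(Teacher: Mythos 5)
Your proof is correct and follows essentially the same route as the paper: apply the asymptotic formula (\ref{asympbeh}), collapse the inner sum via Lemma \ref{lambdarel} using the multiplicities of the nonzero $\beta_i$'s, and reindex using the $D$-periodicity of $\Lambda_{\mathbb{F}_q^\times}(k,\cdot)$ in each argument. The only difference is that you justify that periodicity directly with Lucas' theorem, whereas the paper simply cites it from \cite{cmsep}.
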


\begin{proof}
Let $\boldsymbol{\beta}=(\beta_1,\ldots,\beta_j)$ and $D=p^{\lfloor\log_p(k)\rfloor+1}$.  Recall that
\begin{eqnarray}
 c^{(q)}_{A,L}(k;X)&=&\lim_{n\to \infty}\frac{1}{q^n} \s_{\mathbb{F}_q,L}(\boldsymbol{e}_{n,k};X)\\\nonumber
c_{A,L}^{(q)}(k,\boldsymbol{\beta};X)&=& \lim_{n\to \infty} \frac{1}{q^{n}} \s_{\mathbb{F}_q,L}\left(\sum_{m=0}^j \boldsymbol{e}_m(\beta_1,\ldots, \beta_j)\boldsymbol{e}_{n,k-m};X\right)
\end{eqnarray}
where
\begin{eqnarray}
c^{(q)}_{A,L}(k;X) &=& \frac{1}{D^{q-1}}\sum_{b_{q-1}=0}^{D-1}\cdots \sum_{b_{1}=0}^{D-1} X^{L(\Lambda_{\mathbb{F}_q^\times}(k,b_1,\ldots, b_{q-1}))}\\\nonumber
c_{A,L}^{(q)}(k,\boldsymbol{\beta};X) &=& \frac{1}{D^{q-1}}\sum_{b_{q-1}=0}^{D-1}\cdots \sum_{b_{1}=0}^{D-1} X^{L\left(\sum_{m=0}^j \boldsymbol{e}_m(\beta_1,\ldots,\beta_j)\Lambda_{\mathbb{F}_q^\times}(k-m,b_1,\ldots, b_{q-1})\right)}.
\end{eqnarray}
Let us work with $c_{A,L}^{(q)}(k,\boldsymbol{\beta};X)$.  Suppose that $\alpha_t$ appears $b_t'$ times in the entries of the vector $(\beta_1,\ldots, \beta_j)$.  Then, Lemma \ref{lambdarel} implies
\begin{eqnarray}
c_{A,L}^{(q)}(k,\boldsymbol{\beta};X) &=& \frac{1}{D^{q-1}}\sum_{b_{q-1}=0}^{D-1}\cdots \sum_{b_{1}=0}^{D-1} X^{L\left(\sum_{m=0}^j \boldsymbol{e}_m(\beta_1,\ldots,\beta_j)\Lambda_{\mathbb{F}_q^\times}(k-m,b_1,\ldots, b_{q-1})\right)}\\\nonumber
&=&\frac{1}{D^{q-1}}\sum_{b_{q-1}=0}^{D-1}\cdots \sum_{b_{1}=0}^{D-1} X^{L(\Lambda_{\mathbb{F}_q^\times}(k,b_1+b_1',\ldots, b_{q-1}+b_{q-1}'))}.
\end{eqnarray}
However, we know that $\Lambda_{\mathbb{F}_q^\times}(k,m_1,\ldots, m_{q-1})$ is periodic mod $p$ in each of the entries $m_1,\ldots, m_{q-1}$ with period length $D$ (see \cite{cmsep}).  Since each of the 
variables $b_t$ runs a full period, i.e. from 0 to $D-1$, then 
\begin{eqnarray}
c_{A,L}^{(q)}(k,\boldsymbol{\beta};X) &=&\frac{1}{D^{q-1}}\sum_{b_{q-1}=0}^{D-1}\cdots \sum_{b_{1}=0}^{D-1} X^{L(\Lambda_{\mathbb{F}_q^\times}(k,b_1+b_1',\ldots, b_{q-1}+b_{q-1}'))}\\\nonumber
&=& \frac{1}{D^{q-1}}\sum_{d_{q-1}=0}^{D-1}\cdots \sum_{d_{1}=0}^{D-1} X^{L(\Lambda_{\mathbb{F}_q^\times}(k,d_1,\ldots, d_{q-1}))}\\\nonumber
&=& c^{(q)}_{A,L}(k;X).
\end{eqnarray}
This concludes the proof.
\end{proof}

Next is the generalization of (\ref{asympbeh}).  As before, it is stated for perturbations of elementary symmetric polynomials, but the same holds true for perturbations of linear combinations of them.

\begin{theorem}
\label{asympcoeff}
Let $k>1$ be an integer, $p$ a prime and $q=p^r$ with $r\geq 1$.  Suppose that $F({\bf Y})$ is a polynomial in the variables $Y_1,\ldots, Y_j$ ($j$ fixed) with coefficients from $\mathbb{F}_q$ and that 
$L:\mathbb{F}_q\to \mathbb{F}_q$ is a linear function.  Then,
\begin{equation}
\lim_{n\to \infty}\frac{1}{q^n} \s_{\mathbb{F}_q,L}(\boldsymbol{e}_{n,k}+F({\bf Y});X) = \frac{1}{q^j}c^{(q)}_{A,L}(k;X)\s_{\mathbb{F}_q,L}(F;X).
\end{equation}
\end{theorem}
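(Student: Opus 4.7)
The plan is to combine the decomposition identity (\ref{pertasreg}) with the previous lemma, which tells us that the asymptotic coefficient of any $\boldsymbol{\beta}$-twisted combination equals $c^{(q)}_{A,L}(k;X)$.

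First I would divide both sides of (\ref{pertasreg}) by $q^n$ and factor the normalization as $q^{-n}=q^{-j}\cdot q^{-(n-j)}$, which matches the fact that the inner exponential sum is taken over $\mathbb{F}_q^{n-j}$. This yields
\begin{equation*}
\frac{1}{q^n}\s_{\mathbb{F}_q,L}(\boldsymbol{e}_{n,k}+F({\bf Y});X) = \frac{1}{q^j}\sum_{\boldsymbol{\beta}\in\mathbb{F}_q^j} X^{L(F(\boldsymbol{\beta}))} \cdot \frac{1}{q^{n-j}}\s_{\mathbb{F}_q,L}\!\left(\sum_{m=0}^{j}\boldsymbol{e}_m(\boldsymbol{\beta})\boldsymbol{e}_{n-j,k-m};X\right).
\end{equation*}

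Next I would take the limit as $n\to\infty$. Since the outer sum over $\boldsymbol{\beta}\in\mathbb{F}_q^j$ is finite (with $j$ fixed), I can interchange the limit with the sum. Applying the preceding lemma to each inner term gives
\begin{equation*}
\lim_{n\to\infty}\frac{1}{q^{n-j}}\s_{\mathbb{F}_q,L}\!\left(\sum_{m=0}^{j}\boldsymbol{e}_m(\boldsymbol{\beta})\boldsymbol{e}_{n-j,k-m};X\right) = c^{(q)}_{A,L}(k;X),
\end{equation*}
independently of $\boldsymbol{\beta}$. Pulling this common factor out and recognizing that $\sum_{\boldsymbol{\beta}\in\mathbb{F}_q^j} X^{L(F(\boldsymbol{\beta}))}=\s_{\mathbb{F}_q,L}(F;X)$ by definition (\ref{genfundef}), the right-hand side collapses to $q^{-j}\, c^{(q)}_{A,L}(k;X)\,\s_{\mathbb{F}_q,L}(F;X)$, which is exactly the stated formula.

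There is no real obstacle: the substantive input is the previous lemma, whose proof in turn rests on the periodicity mod $p$ of $\Lambda_{\mathbb{F}_q^\times}$ with period $D$ combined with Lemma \ref{lambdarel}. The only item worth double-checking is the legitimacy of exchanging the limit and the finite sum over $\boldsymbol{\beta}$, which is immediate since $j$ (and hence $q^j$) is fixed while $n$ varies. Thus the theorem follows directly by substituting the asymptotic coefficient into each $\boldsymbol{\beta}$-indexed term of (\ref{pertasreg}).
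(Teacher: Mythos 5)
Your proposal is correct and follows essentially the same route as the paper's own proof: both divide the decomposition (\ref{pertasreg}) by $q^n$, exchange the limit with the finite sum over $\boldsymbol{\beta}\in\mathbb{F}_q^j$, apply the preceding lemma to replace each inner limit by $c^{(q)}_{A,L}(k;X)$, and identify the remaining sum with $\s_{\mathbb{F}_q,L}(F;X)$. No gaps; the normalization bookkeeping $q^{-n}=q^{-j}q^{-(n-j)}$ is exactly the step the paper performs.
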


\begin{proof}
By Theorem \ref{pertasreg} we know that
\begin{equation}
\s_{\mathbb{F}_q,L}(\boldsymbol{e}_{n,k}+F({\bf Y});X) = \sum_{\boldsymbol{\beta}\in \mathbb{F}_q^j}X^{L(F(\boldsymbol{\beta}))}\s_{\mathbb{F}_q,L}\left(\sum_{m=0}^j \boldsymbol{e}_m(\boldsymbol{\beta})
\boldsymbol{e}_{n-j,k-m};X\right).
\end{equation}
Therefore,
\begin{eqnarray}\nonumber
\lim_{n\to \infty} \frac{1}{q^n}\s_{\mathbb{F}_q}(\boldsymbol{e}_{n,k}+F({\bf Y});X) &=& \sum_{\boldsymbol{\beta}\in \mathbb{F}_q^j}X^{L(F(\boldsymbol{\beta}))} 
\left( \lim_{n\to \infty} \frac{1}{q^n} \s_{\mathbb{F}_q,L}\left(\sum_{m=0}^j \boldsymbol{e}_m(\boldsymbol{\beta})\boldsymbol{e}_{n-j,k-m};X\right)\right)\\
&=& \frac{1}{q^j}\sum_{\boldsymbol{\beta}\in \mathbb{F}_q^j}X^{L(F(\boldsymbol{\beta}))} \left( \lim_{n\to \infty} \frac{1}{q^{n-j}} \s_{\mathbb{F}_q,L}\left(\sum_{m=0}^j \boldsymbol{e}_m
(\boldsymbol{\beta})\boldsymbol{e}_{n-j,k-m};X\right)\right)\\\nonumber
&=& \frac{1}{q^j}\sum_{\boldsymbol{\beta}\in \mathbb{F}_q^j}X^{L(F(\boldsymbol{\beta}))} c^{(q)}_{A,L}(k;X)\\\nonumber
&=& \frac{1}{q^j}c^{(q)}_{A,L}(k;X)\s_{\mathbb{F}_q,L}(F;X).
\end{eqnarray}
This concludes the proof.
\end{proof}

Theorem \ref{asympcoeff} is also a generalization of the main theorem of \cite[Th. 4.4]{cm2}.  In fact, the discussion so far about perturbations of elementary symmetric polynomials generalizes most of the results presented in \cite{cm2} for the binary field.  In the next section we show how the results presented in this section can be used to study the distribution of the values of $\boldsymbol{e}_{n,k}$ (and its perturbations) in finite fields.
%%%%%%%%%%%%%%%%%%%%%%%%%%%%%%%%%%%%%%%%%%%%%%%%%%%%%%%%%%%%%%%%%%%%%%%%%%%%%%%%%%%%%%%%%%%%%%%%%%%%%%%%%%%%%%%%%%%%%%%%%%%%%%%%%%%%%%%
% Section: Distribution of the values of elementary symmetric polynomials and their perturbations over $\mathbb{F}_q$
%%%%%%%%%%%%%%%%%%%%%%%%%%%%%%%%%%%%%%%%%%%%%%%%%%%%%%%%%%%%%%%%%%%%%%%%%%%%%%%%%%%%%%%%%%%%%%%%%%%%%%%%%%%%%%%%%%%%%%%%%%%%%%%%%%%%%%%
\section{Distribution of the values of elementary symmetric polynomials and their perturbations over $\mathbb{F}_q$}
\label{distributionSec}

The generating function $\s_{\mathbb{F}_q;L}(F;X)$ can be written as
\begin{equation}
%S_{\mathbb{F}_p}(e_{n,k}) = N_{n,k}(0)+N_{n,k}(1) \xi_p+N_{n,k}(2) \xi_p^2+\ldots+N_{n,k}({p-1}) \xi_p^{p-1}
\s_{\mathbb{F}_q,L}(F;X) = \sum_{\beta\in \mathbb{F}_q}N_{\mathbb{F}_q,L}(F;\beta)X^\beta,
\end{equation}
where 
\begin{equation}
 N_{\mathbb{F}_q,L}(F;\beta) = \left|\{{\bf x} \in \mathbb{F}_q^n \,:\, L(F({\bf x})) = \beta\}\right|, \,\,\,\beta \in \mathbb{F}_q.
 \footnote{Observe that since $\{\s_{\mathbb{F}_q,L}(\boldsymbol{e}_{n,k};X)\}_{n\in \mathbb{N}}$ satisfies the linear recurrence with integer coefficients provided in \cite[Th. 5.7]{cmsep}, then 
 $\{N_{\mathbb{F}_q,L}(\boldsymbol{e}_{n,k};\beta)\}_{n\in \mathbb{N}}$ also satisfies such recurrence.  Also, $N_{\mathbb{F}_q,L}(\boldsymbol{e}_{n,k};\beta)$ has a closed formula similar to the one in Theorem \ref{closedformsSq}.}
\end{equation}
This implies that
\begin{equation}
 \frac{1}{q^n}\s_{\mathbb{F}_q,L}(\boldsymbol{e}_{n,k};X) = \sum_{\beta\in \mathbb{F}_q} \mathbbm{p}^{(q)}_{n,k}(\beta;L) X^\beta
 %\mathbbm{p}_{n,k}(0)+\mathbbm{p}_{n,k}(1) \xi_p+\mathbbm{p}_{n,k}(0) \xi_p^2+\ldots+N_{p-1} \xi_p^{p-1}
\end{equation}
where $\mathbbm{p}^{(q)}_{n,k}(\beta;L) = N_{\mathbb{F}_q,L}(\boldsymbol{e}_{n,k};\beta)/q^n$ is the probability that $L(\boldsymbol{e}_k({\bf x}))$ returns the value $\beta\in\mathbb{F}_q$ when 
${\bf x}$ is randomly selected from $\mathbb{F}_q^n$.

Equation (\ref{asympbehk}) states that
\begin{equation}
 \lim_{n\to \infty} \frac{1}{q^n}\s_{\mathbb{F}_q,L}(\boldsymbol{e}_{n,k};X) = c^{(q)}_{A,L}(k;X).
\end{equation}
Expressing $c^{(q)}_{A,L}(k;X)$ as
\begin{equation}
 c^{(q)}_{A,L}(k;X)= \sum_{\beta\in \mathbb{F}_q} a_\beta X^\beta,
\end{equation}
we see that 
\begin{equation}
\label{nicelim}
 a_\beta = \lim_{n\to \infty} \mathbbm{p}^{(q)}_{n,k}(\beta;L) = \lim_{n\to \infty} \frac{N_{\mathbb{F}_q,L}(\boldsymbol{e}_{n,k};\beta)}{q^n}:=\mathbbm{p}^{(q)}_{k}(\beta;L).
\end{equation}
The limit in (\ref{nicelim}) exists and we call $\mathbbm{p}^{(q)}_{k}(\beta;L)$ the {\em probability at infinity} that $L(\boldsymbol{e}_k({\bf x}))$ returns the value $\beta \in \mathbb{F}_q$ when ${\bf x}$ is randomly selected with entries from $\mathbb{F}_q$.  Clearly, if $\varepsilon>0$, then for all $n$ big enough,
\begin{equation}
\left|\mathbbm{p}^{(q)}_{n,k}(\beta;L)-\mathbbm{p}^{(q)}_{k}(\beta;L)\right|<\varepsilon\,\,\, \text{ for all }\beta\in \mathbb{F}_q.
\end{equation}
Also,
\begin{equation}
 N_{\mathbb{F}_q,L}(\boldsymbol{e}_{n,k};\beta) \sim \mathbbm{p}^{(q)}_{k}(\beta;L) \cdot q^n.
\end{equation}

Observe that under this setting $c_{A,L}^{(q)}(k;X)$ is the probability generating function for $\mathbbm{p}^{(q)}_{k}(\beta;L)$. Therefore, the study of the distribution of the values of 
$L(\boldsymbol{e}_{k}({\bf X}))$ in $\mathbb{F}_q$ is equivalent to the study of $c_{A,L}^{(q)}(k;X)$.  We express the probability generating function for $\mathbbm{p}^{(q)}_{n,k}(\beta;L)$ as
$\G_{n,k}^{(q)}(L;X)$, that is
\begin{equation}
\G_{n,k}^{(q)}(L;X) = \sum_{\beta \in \mathbb{F}_q} \mathbbm{p}^{(q)}_{n,k}(\beta;L) X^{\beta}.
\end{equation}
We also relabel $c_{A,L}^{(q)}(k;X)$ as $\G_{k}^{(q)}(L;X)$ in an attempt to make the fact that $c_{A,L}^{(q)}(k;X)$ is the probability generating function of $\mathbbm{p}^{(q)}_{k}(\beta;L)$ more clear.

The next theorem summarizes the discussion so far.  Again, it is stated for elementary symmetric polynomials, but it can be extended to linear combinations of them.

\begin{theorem}
\label{genfun}
Let $p$ be a prime, $q=p^r$ where $r\geq 1$ and $L:\mathbb{F}_q\to\mathbb{F}_q$ be a linear function.  Suppose that $k>1$ is an integer and $D=p^{\lfloor\log_p(k)\rfloor+1}$. 
%Let $\mathbbm{p}^{(q)}_{k}(t;\Tr)$ 
%be probability that $\Tr(\boldsymbol{e}_k({\bf x}))$ returns the value $t \in \mathbb{F}_p$ when ${\bf x}$ is randomly selected with entries from $\mathbb{F}_q$.  Then,
Then,
\begin{equation}
  \lim_{n\to \infty} \G_{n,k}^{(q)}(L;X)=\frac{1}{D^{q-1}}\sum_{b_{q-1}=0}^{D-1}\cdots \sum_{b_1=0}^{D-1} X^{L\left(\Lambda_{\mathbb{F}_q^\times}(k,b_1,\ldots,b_{q-1})\right)}=\G_k^{(q)}(L;X).
 \end{equation}
%\begin{equation}
%\sum_{t=0}^{p-1} \mathbbm{p}^{(q)}_{k}(t;\Tr) \, \xi_p^t = c_A^{(q)}(k)=\frac{1}{D^{q-1}}\sum_{b_{q-1}=0}^{D-1}\cdots \sum_{b_1=0}^{D-1} \xi_p^{\Tr\left(\Lambda_{\mathbb{F}_q^\times}^{(p)}(k,b_1,\ldots,b_{q-1})\right)}
%\end{equation}
\end{theorem}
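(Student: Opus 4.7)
The statement is essentially a packaging of the analysis already carried out above equation (\ref{asympbehk}), so my plan is to assemble the pieces rather than introduce new machinery. The key identification to make first is that, by the definition of $\mathbbm{p}^{(q)}_{n,k}(\beta;L)$ as $N_{\mathbb{F}_q,L}(\boldsymbol{e}_{n,k};\beta)/q^n$, one has
\begin{equation*}
\G_{n,k}^{(q)}(L;X) \;=\; \sum_{\beta\in\mathbb{F}_q}\mathbbm{p}^{(q)}_{n,k}(\beta;L)X^\beta \;=\; \frac{1}{q^n}\sum_{\beta\in\mathbb{F}_q}N_{\mathbb{F}_q,L}(\boldsymbol{e}_{n,k};\beta)X^\beta \;=\; \frac{1}{q^n}\s_{\mathbb{F}_q,L}(\boldsymbol{e}_{n,k};X).
\end{equation*}
So the theorem reduces to computing the limit of $q^{-n}\s_{\mathbb{F}_q,L}(\boldsymbol{e}_{n,k};X)$.

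Next I would invoke Theorem \ref{closedformsSq} to expand $\s_{\mathbb{F}_q,L}(\boldsymbol{e}_{n,k};X)$ as a linear combination of $n$-th powers $(1+\xi_D^{-j_1}+\cdots+\xi_D^{-j_{q-1}})^n$ with coefficients $c_{j_1,\ldots,j_{q-1};L}(k;X)$ independent of $n$. The triangle inequality gives $|1+\xi_D^{-j_1}+\cdots+\xi_D^{-j_{q-1}}|\leq q$, with equality if and only if every $\xi_D^{-j_t}=1$, i.e.\ $j_1=\cdots=j_{q-1}=0$. Dividing by $q^n$ therefore kills every term in the sum of Theorem \ref{closedformsSq} except the one corresponding to the all-zero tuple, and yields
\begin{equation*}
\lim_{n\to\infty}\frac{1}{q^n}\s_{\mathbb{F}_q,L}(\boldsymbol{e}_{n,k};X) \;=\; c_{0,\ldots,0;L}(k;X).
\end{equation*}

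Finally I would unpack the definition of $c_{0,\ldots,0;L}(k;X)$: setting all $j_t=0$ in the formula for $c_{j_1,\ldots,j_{q-1};L}(k;X)$ collapses the inner sum over rearrangements to $1$ and removes every $\xi_D$-factor, leaving exactly
\begin{equation*}
c^{(q)}_{A,L}(k;X) \;=\; \frac{1}{D^{q-1}}\sum_{b_{q-1}=0}^{D-1}\cdots\sum_{b_1=0}^{D-1}X^{L(\Lambda_{\mathbb{F}_q^\times}(k,b_1,\ldots,b_{q-1}))},
\end{equation*}
which is the quantity relabeled $\G_k^{(q)}(L;X)$ in the text preceding the theorem. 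This completes the chain of equalities.

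There is no substantive obstacle: the heavy lifting was done by Theorem \ref{closedformsSq} and by the simple extremal observation that $q$ is the unique maximum modulus among the bases appearing in the $n$-th powers. The only thing to be careful about is that $\G_{n,k}^{(q)}(L;X)$ and $\G_k^{(q)}(L;X)$ are being regarded here as formal polynomials in $X$ with real coefficients (namely the $\mathbbm{p}^{(q)}_{n,k}(\beta;L)$ and $\mathbbm{p}^{(q)}_{k}(\beta;L)$), so the ``limit'' is taken coefficientwise—equivalently, the convergence is the coordinatewise convergence of the probability vectors, which follows from the dominant-term analysis above applied to each coefficient individually.
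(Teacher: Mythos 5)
Your proposal is correct and follows the same route as the paper: the theorem is just the summary of the preceding discussion, namely the identification $\G_{n,k}^{(q)}(L;X)=q^{-n}\s_{\mathbb{F}_q,L}(\boldsymbol{e}_{n,k};X)$ combined with the dominant-term analysis of the closed formula in Theorem \ref{closedformsSq}, where only the tuple $j_1=\cdots=j_{q-1}=0$ attains modulus $q$ and survives after dividing by $q^n$. Your added remark that the limit is taken coefficientwise on the probability vector is a sensible clarification but not a departure from the paper's argument.
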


The study of perturbations of the form $\boldsymbol{e}_{n,k}+F({\bf X})$ follows in an analogous way.  We use the notation $\G_{n,k;F}^{(q)}(L;X)$ to represent 
\begin{equation}
\G_{n,k;F}^{(q)}(L;X) = \sum_{\beta \in \mathbb{F}_q} \mathbbm{p}^{(q)}_{n,k;F}(\beta;L) X^{\beta},
\end{equation}
with $\mathbbm{p}^{(q)}_{n,k;F}(\beta;L)$ defined in the natural way.  As in the previous discussion, the limit
\begin{equation}
 \lim_{n\to \infty} \mathbbm{p}^{(q)}_{n,k;F}(\beta;L)
\end{equation}
exists.  The value of the limit is denoted by $\mathbbm{p}^{(q)}_{k;F}(\beta;L)$ and we use $\G_{k;F}^{(q)}(L;X)$ to represent the probability generating function of $\mathbbm{p}^{(q)}_{k;F}(\beta;L)$.  Observe that the conclusion of Theorem \ref{asympcoeff} can be re-stated as
\begin{eqnarray}
 \lim_{n\to \infty} \G_{n,k;F}^{(q)}(L;X)&=& \G_{k;F}^{(q)}(L;X)\\\nonumber
 &=& \frac{1}{q^j}\G_{k}^{(q)}(L;X) \s_{\mathbb{F}_q,L}(F;X).
\end{eqnarray}

\begin{remark}
When $L(X)=X$, we drop the ``$L$" in the notation of our functions.  For example, we write $\mathbbm{p}^{(q)}_{n,k}(\beta)$ instead of $\mathbbm{p}^{(q)}_{n,k}(\beta;L)$ or $\G_{k}^{(q)}(X)$ instead of $\G_{k}^{(q)}(L;X)$.
\end{remark}

\begin{example}
%\label{oliverex}
Consider the polynomial $\boldsymbol{e}_{5}({\bf X})$ over $\mathbb{F}_4=\mathbb{F}_2(\alpha)$ with $\alpha^2+\alpha+1=0$.  In this case,

\begin{equation}
\G_{5}^{(4)}(X) = \frac{11}{32}+\frac{7}{32}X+\frac{7}{32}X^{\alpha }+\frac{7}{32}X^{\alpha +1}.
\end{equation}
This implies that the probability at infinity that $\boldsymbol{e}_{5}({\bf x})$ returns 0 is $11/32$ and the probabilities that it returns $1,\alpha$ and $\alpha+1$ are all $7/32$.

Let $F({\bf X})=X_1 X_2+X_1 X_3 X_2+X_3 X_2+X_1 X_3$ and consider the perturbation polynomial $\boldsymbol{e}_{n,5}+F({\bf X})$.  Theorem \ref{asympcoeff} implies that

\begin{eqnarray}
\G_{5;F}^{(4)}(X) &=& \frac{1}{4^3}\G_{5}^{(4)}(X)\s_{\mathbb{F}_4}(F;X)\\\nonumber
&=& \frac{1}{64}\left(\frac{11}{32}+\frac{7}{32}X+\frac{7}{32}X^{\alpha }+\frac{7}{32}X^{\alpha +1}\right)\left(17+21X+13X^{\alpha}+13X^{\alpha+1}\right)\\\nonumber
&=&\frac{187}{2048}+\frac{175}{1024}X+\frac{147}{2048}X^2+\frac{131}{1024}X^{\alpha}+\frac{91}{2048}X^{2\alpha}+\\\nonumber
& &\frac{125}{512}X^{\alpha+1}+\frac{119}{1024}X^{\alpha+2}+\frac{91}{1024}X^{2\alpha+1}+\frac{91}{2048}X^{2\alpha+2}\\\nonumber
&=& \frac{129}{512}+\frac{133}{512}X+\frac{125}{512}X^{\alpha}+\frac{125}{512}X^{\alpha+1},
\end{eqnarray}
where the last equation comes from the fact that we are working on $\mathbb{F}_4=\mathbb{F}_2(\alpha)$.  Observe that this implies that the probability at infinity that $\boldsymbol{e}_{5}({\bf x})+F({\bf x})$ returns 0 is $129/512$, the probability it returns 1 is $133/512$ and the probabilities that it returns $\alpha$ and $\alpha+1$ are $125/512$ each.
\end{example}

\begin{example}
Consider now the polynomial $\boldsymbol{e}_{4}({\bf X})$ over $\mathbb{F}_9=\mathbb{F}_3(\alpha)$, where $\alpha^2+1=0$.  In this case,
\begin{equation}
\label{overF9}
\G_{4}^{(9)}(X) = \sum_{\beta \in \mathbb{F}_3}\frac{29}{243}X^\beta+\sum_{\beta \in \mathbb{F}_9\setminus \mathbb{F}_3}\frac{26}{243}X^\beta.
\end{equation}
%One consequence of (\ref{overF9}) is that $N_{\mathbb{F}_9}(\boldsymbol{e}_{n,4};\beta)\sim \frac{29}{243} \cdot 9^n$ for $\beta \in \mathbb{F}_3$ and $N_{\mathbb{F}_9}(\boldsymbol{e}_{n,4};\beta)\sim \frac{26}{243} \cdot 9^n$ for $\beta \notin \mathbb{F}_3$.
Consider now the perturbation $\boldsymbol{e}_{n,4}+F({\bf X})$ where $F({\bf X})=X_1 X_2 X_3+X_1 X_2+X_3$.  Observe that
\begin{eqnarray}
 \G_{4;F}^{(9)}(X)&=& \frac{1}{9^3}\G_{4}^{(9)}(X)\s_{\mathbb{F}_9}(F;X)\\\nonumber
 &=& \frac{1}{729}\left(\sum_{\beta \in \mathbb{F}_3}\frac{29}{243}X^\beta+\sum_{\beta \in \mathbb{F}_9\setminus \mathbb{F}_3}\frac{26}{243}X^\beta\right)\left(145X^2+\sum_{\beta \in\mathbb{F}_9\setminus\{2\}} 73 X^\beta\right)\\\nonumber
 &=& \sum_{\beta \in \mathbb{F}_3}\frac{2203}{19683}X^\beta+\sum_{\beta \in \mathbb{F}_9\setminus \mathbb{F}_3}\frac{2179}{19683}X^\beta.
\end{eqnarray}

\end{example}

One of the first persons to study  (if not the first one) the asymptotic distribution of the values of elementary symmetric polynomials over finite fields of {\it prime order} was N. J. Fine \cite{fine}.  He proved that $\mathbbm{p}^{(p)}_{k}(t)$ exists for every prime $p$ and calculated the distribution of $\mathbbm{p}^{(p)}_{k}(t)$ for $p=2,3$.  He also established that for $p$ equal to 2 or 3 (highlighted by Aberth \cite{aberth}),
\begin{enumerate}
\item $\mathbbm{p}^{(p)}_{k}(0)\geq 1/p$,
\item $\mathbbm{p}^{(p)}_{k}(0) = 1/p$ only if  $k= d \cdot p^l$ where $1\leq d \leq p-1$,
\item $\mathbbm{p}^{(p)}_{k}(t) = 1/p$ if  $k= d \cdot p^l$ where $1\leq d \leq p-1$,
\item $\mathbbm{p}^{(p)}_{k p}(t) = \mathbbm{p}^{(p)}_{k}(t)$,
\item $\mathbbm{p}^{(p)}_{k}(0) \geq \mathbbm{p}^{(p)}_{k}(t)$ with equality only for $k= d \cdot p^l$ where $1\leq d \leq p-1$.
\end{enumerate}
Fine also proved (3) for all $p$, which implies that the proof of the generalization of the conjecture of Cusick, Li and St$\check{\mbox{a}}$nic$\check{\mbox{a}}$ presented in \cite{acgmr} is expected to be 
much harder than the binary counterpart.  In particular, when $p>2$, the approach presented in \cite{cm1} will fail to prove the conjecture asymptotically when $k=d \cdot p^l$ and $1< d \leq p-1$.

Fine proposed as problems the veracity of the other properties for general $p$.  O. Aberth \cite{aberth} disproved (2) and (5) by showing that $\mathbbm{p}^{(5)}_6(0)=1/5$ and $\mathbbm{p}^{(5)}_6(2)=26/125$.  He also showed that $\mathbbm{p}^{(5)}_{30}(0)=15749/78125>1/5=\mathbbm{p}^{(5)}_6(0)$ and therefore (4) is also false.  In \cite{jdsmith}, J. D. Smith generalized Aberth's example and showed that if $p>3$ is prime, then
\begin{equation}
\mathbbm{p}^{(p)}_{p+1}(t) = \begin{cases}
 \frac{1}{p}, & t=0 \\
 \frac{1}{p} +   \leg{2t}p \frac{1}{p^{\mu}}, & t\neq 0,
\end{cases}
\end{equation}
where $\mu=(p+1)/2$ and $\leg{a}p$ represents the Legendre symbol.  Smith's general formula for $\mathbbm{p}^{(p)}_k(t)$ as a multisum coincides with our formula in Theorem \ref{genfun} for $q=p$ and
$L(X)=X$.

As mentioned at the beginning of Section \ref{asympsec}, one of the reasons the asymptotic behavior of exponential sums of symmetric polynomials was calculated over the binary field was to provide an asymptotic proof of Conjecture \ref{clsconj} (see \cite{cm1}).  The concept of {\it asymptotically balanced symmetric Boolean function} was introduced in \cite{cm1} to mean that $c_A^{(2)}(k;-1)=0$. Conjecture \ref{clsconj} was proved asymptotically in \cite{cm1} by showing that $\boldsymbol{e}_{n,k}$ is asymptotically balanced if and only if $k$ is a power of two.  Observe that if a polynomial is not asymptotically balanced, then we know that it is not balanced for a sufficiently large number of variables.  Thus, asymptotically balanced polynomials are good candidates for regular balancedness.  The concept of asymptotically balanced polynomials was extended to perturbations of elementary symmetric polynomials in \cite{cm2}.

A natural generalization for the concept of asymptotically balanced symmetric polynomial over $\mathbb{F}_q$ is to say that a polynomial $\boldsymbol{e}_{n,k}$ is {\it asymptotically balanced} if and only if 
\begin{equation}
 \mathbbm{p}_k^{(q)}(\beta)=\frac{1}{q},\,\,\, \text{ for every }\beta\in\mathbb{F}_q.
\end{equation}
The concept can also be extended to perturbations in the only natural way, that is, by saying that a perturbation $\boldsymbol{e}_{n,k}+F({\bf X})$ is asymptotically balanced if and only if 
\begin{equation}
 \mathbbm{p}_{k;F}^{(q)}(\beta)=\frac{1}{q},\,\,\, \text{ for every }\beta\in\mathbb{F}_q.
\end{equation}
Observe that Fine \cite{fine} proved that $\boldsymbol{e}_{n,k}$ is asymptotically balanced over the prime field $\mathbb{F}_p$ when $k= d \cdot p^l$ where $1\leq d \leq p-1$.  In \cite[Th. 2]{acgmr}, it was proved that if $q=p^r$, then $\boldsymbol{e}_{n,p^\ell}$ is asymptotically balanced over $\mathbb{F}_q$ for every $\ell$.

One of the main goals in \cite{cm2} was to identify when a particular pertubation is asymptotically balanced over $\mathbb{F}_2$.  It was showed \cite[Cor. 4.5]{cm2} that a perturbation $\boldsymbol{e}_{n,k}+F({\bf X})$ is asymptotically balanced over $\mathbb{F}_2$ if and only if $\boldsymbol{e}_{n,k}$ is asymptotically balanced or $F({\bf X})$ is a balanced function.  The same result holds true over any prime field, but it is not necessarily true over finite fields in general.

\begin{proposition}
\label{PropAsympBalanced}
 Let $p$ be a prime.  Suppose that $F({\bf X}) \in \mathbb{F}_p[X_1,\cdots, X_j]$ ($j$ fixed).  Then, 
 $$\mathbbm{p}^{(p)}_{k;F}(t) = \frac{1}{p}, \,\, \text{ for every }t \in \mathbb{F}_p$$
 if and only if $\mathbbm{p}^{(p)}_{k}(t)=1/p$ for every $t\in \mathbb{F}_p$ or $\s_{\mathbb{F}_p}(F;X) =\sum_{t \in \mathbb{F}_p} p^{j-1} X^t$.  In other words,  $\boldsymbol{e}_{n,k}+F({\bf X})$ is asymptotically balanced if and only if $\boldsymbol{e}_{n,k}$ is asymptotically balanced or $F({\bf X)}$ is balanced.
\end{proposition}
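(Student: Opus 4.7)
The plan is to reduce the statement to a vanishing condition at the non-trivial $p$-th roots of unity and then invoke Galois theory over the cyclotomic field $\mathbb{Q}(\xi_p)$. The starting point is the identity
\[
\G_{k;F}^{(p)}(X) = \frac{1}{p^j}\,\G_k^{(p)}(X)\,\s_{\mathbb{F}_p}(F;X)
\]
from Theorem \ref{asympcoeff}, where the product is taken in $\mathbb{Q}[X]/(X^p-1)$ because the exponents lie in $\mathbb{F}_p$. A probability generating function on $\mathbb{F}_p$ represents the uniform (balanced) distribution if and only if it equals $\frac{1}{p}\Phi_p(X)$ with $\Phi_p(X)=1+X+\cdots+X^{p-1}$, which is in turn characterized by its vanishing at $\xi_p^t$ for every $1\le t\le p-1$. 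So asymptotic balancedness of $\boldsymbol{e}_{n,k}+F({\bf X})$ is equivalent to
\[
\G_k^{(p)}(\xi_p^t)\cdot\s_{\mathbb{F}_p}(F;\xi_p^t) = 0 \qquad \text{for every } 1\le t \le p-1.
\]

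For the ($\Leftarrow$) direction I would verify by a direct computation in $\mathbb{Q}[X]/(X^p-1)$, using the identity $\Phi_p(X)\cdot g(X) \equiv g(1)\,\Phi_p(X)$, that multiplying $p^{-j}\,\s_{\mathbb{F}_p}(F;X)$ by $\frac{1}{p}\Phi_p(X)$, or multiplying $\G_k^{(p)}(X)$ by $p^{j-1}\Phi_p(X)$, yields $\frac{1}{p}\Phi_p(X)$ in either case. This handles the two possible hypotheses simultaneously.

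The ($\Rightarrow$) direction is the heart of the argument, and my approach relies on two facts: both $\G_k^{(p)}(X)$ and $\s_{\mathbb{F}_p}(F;X)$ have rational (indeed integer up to the fixed denominator) coefficients, and for $p$ prime the Galois group $\mathrm{Gal}(\mathbb{Q}(\xi_p)/\mathbb{Q}) \cong (\mathbb{Z}/p\mathbb{Z})^\times$ acts transitively on $\{\xi_p,\xi_p^2,\ldots,\xi_p^{p-1}\}$. Consequently, $\{\G_k^{(p)}(\xi_p^t)\}_{t=1}^{p-1}$ forms a single Galois orbit, so these $p-1$ values are either all zero or all nonzero; the same holds for $\{\s_{\mathbb{F}_p}(F;\xi_p^t)\}_{t=1}^{p-1}$. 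I then proceed by dichotomy. If $\G_k^{(p)}(\xi_p)=0$, then by a degree count $\G_k^{(p)}(X)$ is a rational multiple of $\Phi_p(X)$, and evaluating at $X=1$ with $\G_k^{(p)}(1)=1$ forces $\G_k^{(p)}(X) = \frac{1}{p}\Phi_p(X)$, i.e.\ $\boldsymbol{e}_{n,k}$ is asymptotically balanced. Otherwise every $\G_k^{(p)}(\xi_p^t)$ is nonzero, so the equivalence above forces every $\s_{\mathbb{F}_p}(F;\xi_p^t)$ to vanish, and the analogous degree argument together with $\s_{\mathbb{F}_p}(F;1)=p^j$ produces $\s_{\mathbb{F}_p}(F;X) = p^{j-1}\Phi_p(X)$, i.e.\ $F({\bf X})$ is balanced.

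The main obstacle is precisely the Galois-transitivity step, which is also the reason the statement is restricted to prime fields: for $q=p^r$ with $r>1$ the additive characters of $\mathbb{F}_q$ split into several Galois orbits under $(\mathbb{Z}/p\mathbb{Z})^\times$, so vanishing of the product at every character can be engineered by distributing the zeros between the two factors across distinct orbits, without forcing either factor to be balanced on its own. This is exactly the structural phenomenon that the counterexample construction announced in the introduction is expected to exploit.
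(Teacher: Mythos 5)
Your proposal is correct and follows essentially the same route as the paper: the sufficiency direction is the identity $\Phi_p(X)g(X)\equiv g(1)\Phi_p(X)$ in $\mathbb{Q}[X]/(X^p-1)$, and the necessity direction evaluates the factorization $\G_{k;F}^{(p)}=p^{-j}\G_k^{(p)}\cdot\s_{\mathbb{F}_p}(F;\cdot)$ at $\xi_p$, forces one factor to vanish, and then uses that $\Phi_p$ is the minimal polynomial of $\xi_p$ together with a degree count to conclude that factor is a constant multiple of $\Phi_p$. The Galois-transitivity step you highlight is harmless but not actually needed, since vanishing at the single root $\xi_p$ already suffices; otherwise the argument matches the paper's.
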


\begin{proof}
Theorem \ref{asympcoeff} implies that 
\begin{equation}
\label{genfunprod}
 \G_{k;F}^{(p)}(X)= \frac{1}{p^j}\G_{k}^{(p)}(X) \s_{\mathbb{F}_p}(F;X).
\end{equation}
Suppose first that $\mathbbm{p}^{(p)}_{k}(t)=1/p$ for every $t\in \mathbb{F}_p$ or $\s_{\mathbb{F}_p}(F;X) =\sum_{t \in \mathbb{F}_p} p^{j-1} X^t$. Then, the equation
\begin{equation}
\label{polytrick}
 \sum_{\beta \in \mathbb{F}_q}a_{\beta} X^{\beta}\sum_{\beta \in \mathbb{F}_q} X^{\beta} = 
 \sum_{\beta \in \mathbb{F}_q}\left(\sum_{\gamma \in \mathbb{F}_q}a_{\gamma}\right) X^{\beta},
\end{equation}
which is true for any finite field $\mathbb{F}_q$, together with (\ref{genfunprod}) imply that the coefficients of $\G_{k;F}^{(p)}(X)$ are all equal.  But that can only be true if
$$\mathbbm{p}^{(p)}_{k;F}(t) = \frac{1}{p}, \,\, \text{ for every }t \in \mathbb{F}_p.$$

To prove the other direction, let $X=\xi_p=\exp(2\pi i/p)$.  That transforms (\ref{genfunprod}) into 
\begin{equation}
 \G_{k;F}^{(p)}(\xi_p)= \frac{1}{p^j}\G_{k}^{(p)}(\xi_p) S_{\mathbb{F}_p}(F),
\end{equation}
where $S_{\mathbb{F}_p}(F)$ is the regular exponential sum of $F$ (a complex number).  If it is true that $\mathbbm{p}^{(p)}_{k;F}(t) = 1/p$ for every $t\in \mathbb{F}_p$, then 
$$\G_{k;F}^{(p)}(\xi_p)=\sum_{t\in \mathbb{F}_p} \frac{1}{p}\, \xi_p^t =0.$$
But then 
$$\frac{1}{p^j}\G_{k}^{(p)}(\xi_p) S_{\mathbb{F}_p}(F) = 0,$$
and so $\G_{k}^{(p)}(\xi_p)=0$ or $S_{\mathbb{F}_p}(F) = 0$.  If the latter is true, then $F({\bf X})$ is balanced over $\mathbb{F}_p$.  If $\G_{k}^{(p)}(\xi_p)=0$, then the minimal polynomial of $\xi_p$, i.e. $\Phi_p(X)=1+X+X^2+\cdots+X^{p-1}$, divides the polynomial $\G_{k}^{(p)}(X)$.  Since both polynomials are of the same degree, then $\G_{k}^{(p)}(X)$ is a constant multiple of $\Phi_p(X)$.  We conclude that $\mathbbm{p}^{(p)}_{k}(t)=1/p$ for every $t\in \mathbb{F}_p$, i.e. $\boldsymbol{e}_{n,k}$ is asymptotically balanced.  This concludes the proof.
\end{proof}

Proposition \ref{PropAsympBalanced} is not true for $\mathbb{F}_q$ when $q$ is not prime.  The sufficient part still holds and is a consequence of equation (\ref{polytrick}), but the necessary part is not true in general.  Next we present a method to construct counterexamples of Proposition \ref{PropAsympBalanced} over $\mathbb{F}_q$.  

\subsection{A construction for counterexamples over $\mathbb{F}_q$}

Let $q=p^r$ with $r>1$.  We want to find an elementary symmetric polynomial $\boldsymbol{e}_{n,k}$ and a polynomial $F({\bf X}) \in \mathbb{F}_q[X_1,\ldots,X_j]$, such that $\boldsymbol{e}_{n,k}$ is not asymptotically balanced over $\mathbb{F}_q$ and $F({\bf X})$ is not balanced over $\mathbb{F}_q$, but $\boldsymbol{e}_{n,k}+F({\bf X})$ is asymptotically balanced over $\mathbb{F}_q$.  

Suppose that $\boldsymbol{e}_{n,k}$ was selected such that it is not asymptotically balanced over $\mathbb{F}_q$.  Recall that 
\begin{equation}
\label{geneq}
 \G_{k;F}^{(q)}(X)= \frac{1}{q^j}\G_{k}^{(q)}(X) \s_{\mathbb{F}_q}(F;X).
\end{equation} 
Suppose that 
\begin{equation}
 \G_{k}^{(q)}(X)= \sum_{\beta\in\mathbb{F}_q} a_{\beta} X^{\beta} \text{ and }\frac{1}{q^j}\s_{\mathbb{F}_q}(F;X)= \sum_{\beta\in\mathbb{F}_q} b_{\beta} X^{\beta},
\end{equation}
where $0< a_\beta,b_\beta < 1$ and $\sum_{\beta\in\mathbb{F}_q}a_\beta=\sum_{\beta\in\mathbb{F}_q}b_\beta=1$ and that 
\begin{equation}
 \G_{k;F}^{(q)}(X)=\sum_{\beta\in\mathbb{F}_q} \frac{1}{q} X^\beta.
\end{equation}
Observe that, by assumption on $\boldsymbol{e}_{n,k}$, not all $a_{\beta}$'s are equal.  Equation (\ref{geneq}) can now be expressed as
\begin{equation}
 \sum_{\beta\in\mathbb{F}_q} \frac{1}{q} X^\beta = \sum_{\beta\in\mathbb{F}_q}\left(\sum_{\gamma\in\mathbb{F}_q}a_{\beta-\gamma}b_{\gamma}\right)X^\beta,
\end{equation}
which can be written in matrix form as
\begin{equation}
\label{matrixprob}
 \frac{1}{q}\boldsymbol{1} = A_{q,k}\cdot \boldsymbol{b},
\end{equation}
where $\boldsymbol{1}$ and $\boldsymbol{b}$ are the column vectors whose entries are all 1's and all the $b_\beta$'s (resp.), and $A_{q,k}$ is the $q\times q$ matrix $A_{q,k}=(a_{\beta-\gamma})_{\beta,\gamma}$.  The problem now is to verify if a solution to (\ref{matrixprob}) with $\boldsymbol{b}\neq (1/q)\boldsymbol{1}$ is possible.

Observe that $A_{q,k}$ is a doubly stochastic matrix.  That means that $(1/q)\boldsymbol{1}$ is an eigenvector (corresponding to the eigenvalue $\lambda=1$).  It also implies that (see \cite{ibe})
\begin{equation}
 \lim_{N\to \infty} A_{q,k}^N =\frac{1}{q}J_{q}:= \frac{1}{q} \left(
\begin{array}{cccc}
 1 & 1 & \cdots & 1 \\
 1 & 1 & \cdots & 1 \\
 \vdots & \vdots & \ddots & \vdots \\
 1 & 1 & \cdots & 1 \\
\end{array}
\right).
\end{equation}
Suppose that $A_{q,k}$ also happens to be singular.  Let $\boldsymbol{v}$ a non-trivial vector in the null space of $A_{q,k}$.  Then $A_{q,k}^N\boldsymbol{v}=\boldsymbol{0}$ for every $N$ and so
\begin{equation}
 \boldsymbol{0} = \lim_{N\to \infty} A_{q,k}^N \boldsymbol{v} = \frac{1}{q} J_q \boldsymbol{v},
\end{equation}
which implies that $v_1+\cdots+v_q=0$.  Now choose $\varepsilon>0$ small enough such that all entries of
\begin{equation}
 \frac{1}{q}\boldsymbol{1}+\varepsilon \boldsymbol{v} =  \left(\begin{array}{c}
 1/q + \varepsilon v_1  \\
 1/q + \varepsilon v_2\\
 \vdots\\
 1/q + \varepsilon v_q
\end{array}
\right)
\end{equation}
are positive.  Observe that 
$$\sum_{j=1}^q \left(\frac{1}{q}+\varepsilon v_j\right) = 1 + \varepsilon \sum_{j=1}^q v_j = 1,$$
which means that $(1/q)\boldsymbol{1}+\varepsilon \boldsymbol{v}$ is a probability vector different from $(1/q)\boldsymbol{1}$ that satisfies 
\begin{equation}
 A_{q,k} \left(\frac{1}{q}\boldsymbol{1}+\varepsilon \boldsymbol{v}\right) = \frac{1}{q}A_{q,k}\boldsymbol{1}+\varepsilon A_{q,k}\boldsymbol{v} = \frac{1}{q}\boldsymbol{1}+\boldsymbol{0}= \frac{1}{q}\boldsymbol{1}.
\end{equation}
In other words, $(1/q)\boldsymbol{1}+\varepsilon \boldsymbol{v}$ is a probability vector different from $(1/q)\boldsymbol{1}$ that is a solution to (\ref{matrixprob}).

To finish off the construction, choose an appropriate $\varepsilon$ of the form $1/q^j$.  Write
\begin{equation}
 \frac{1}{q}\boldsymbol{1}+\varepsilon \boldsymbol{v} = \frac{1}{q^j}\left(\begin{array}{c}
 m_1 \\
 m_2 \\
 \vdots\\
 m_q
\end{array}
\right)
\end{equation}
where $m_1+\cdots+m_q = q^j$ and not all $m_t$'s equal $q^{j-1}$.  Label the finite field as $\mathbb{F}_q = \{\beta_1,\cdots,\beta_q\}$.  Construct any function $\mathbb{F}_q^j\to \mathbb{F}_q$ such that in its output table (range) $\beta_t$ appears $m_t$ times.  Let $F(X_1,\cdots,X_j)$ be the polynomial with coefficients in $\mathbb{F}_q$ that represents such function.  The polynomial $F(X_1,\cdots,X_j)$ always exists and it is known as the {\em algebraic normal form} of the function.  Observe that 
\begin{equation}
 \s_{\mathbb{F}_q}(F;X) = \sum_{\beta \in \mathbb{F}_q} N_{\mathbb{F}_q}(F;\beta)X^\beta = \sum_{t=1}^q m_t X^{\beta_t}.
\end{equation}
and so $F({\bf X})$ is not balanced over $\mathbb{F}_q$.  By assumption, $\boldsymbol{e}_{n,k}$ is not asymptotically balanced, but 
\begin{eqnarray*}
 \G_{k;F}^{(q)}(X) &=& \G_{k}^{(q)}(X)\cdot \frac{1}{q^j}\s_{\mathbb{F}_q}(F;X) = \sum_{\beta\in\mathbb{F}_q} \frac{1}{q} X^\beta
\end{eqnarray*}
by construction of $F$.  Therefore, $\boldsymbol{e}_{n,k}$ is not asymptotically balanced over $\mathbb{F}_q$, $F({\bf X})$ is not balanced over $\mathbb{F}_q$, but $\boldsymbol{e}_{n,k}+F({\bf X})$ is asymptotically  balanced over $\mathbb{F}_q$.

\begin{remark}
We know that Proposition \ref{PropAsympBalanced} is true when $q=p$. Therefore, the construction will fail to produce a counterexample over $\mathbb{F}_p$.  The step that fails is $A_{p,k}$ being singular.  See, when $p$ is prime, the matrix $A_{p,k}$ is not only doubly stochastic, but also a circulant matrix.  Therefore, its determinant will be given by 
\begin{equation}
 \det(A_{p,k}) = \prod_{t=0}^{p-1}\left(a_0+a_{p-1}\omega_t+a_{p-2}\omega_t^2+\cdots+a_1 \omega_t^{p-1}\right),
\end{equation}
where $\omega_t=\exp(2\pi i t/p)$.  But then $\det(A_{p,k})=0$ if and only if $a_0=a_1=\cdots=a_{p-1}$, i.e. if and only if $\boldsymbol{e}_{n,k}$ is asymptotically balanced over $\mathbb{F}_p$.  However, $\boldsymbol{e}_{n,k}$ was specifically chosen to be not asymptotically balanced. 
\end{remark}

\begin{example}
Consider $\mathbb{F}_4=\mathbb{F}_2(\alpha)$ with $\alpha^2+\alpha+1=0$.  Select $\boldsymbol{e}_{n,3}$ and observe that 
\begin{equation}
 \G_{3}^{(4)} =\frac{5}{16}+\frac{5}{16}X+\frac{3}{16}X^{\alpha}+\frac{3}{16}X^{\alpha +1}.
\end{equation}
For this particular example, the $4\times4$ matrix $A_{4,3}$ is given by 
\begin{equation}
A_{4,3}=\left(
\begin{array}{cccc}
 5/16 & 5/16 & 3/16 & 3/16 \\
 5/16 & 5/16 & 3/16 & 3/16 \\
 3/16 & 3/16 & 5/16 & 5/16 \\
 3/16 & 3/16 & 5/16 & 5/16 \\
\end{array}
\right),
\end{equation}
which is a singular $4\times 4$ doubly stochastic matrix.  Therefore $(1/4)\boldsymbol{1}$ is an eigenvector for $A_{4,3}$.

The null space of $A_{4,3}$ is spanned by the vectors
$$\boldsymbol{v}_1=\left(
\begin{array}{r}
 -1 \\
 1 \\
 0 \\
 0 \\
\end{array}
\right)\,\, \text{ and }\,\,\boldsymbol{v}_2=\left(
\begin{array}{r}
 0 \\
 0 \\
 -1 \\
 1 \\
\end{array}
\right).$$
Observe that the entries of both vectors add up to 0, as predicted by the above discussion.  Choose $\varepsilon=1/4^2=1/16$.  Then,
$$\frac{1}{4}\boldsymbol{1}+\varepsilon \boldsymbol{v}_1=\left(
\begin{array}{c}
 3/16 \\
 5/16 \\
 1/4 \\
 1/4 \\
\end{array}
\right)=\frac{1}{16}\left(
\begin{array}{c}
 3 \\
 5 \\
 4 \\
 4 \\
\end{array}
\right).$$
Now choose a polynomial in two variables over $\mathbb{F}_4$ such that it returns the value 0 three times, the value 1 five times, the value $\alpha$ four times and the value $\alpha+1$ four times.  Such polynomials exist and 
$$F(X_1,X_2) = X_1^3 X_2^3+X_2^2+X_1^2$$
is an example. % In fact,
%\begin{eqnarray*}
% F({\bf x})=\begin{cases}
% 0, & {\bf x}=(0,0),(\alpha,\alpha+1),(\alpha+1,\alpha) \\
% 1, & {\bf x}=(0,1),(1,0), (1,1)(\alpha,\alpha),(\alpha+1,\alpha+1)\\
% \alpha, & {\bf x}=(0,\alpha+1),(1,\alpha+1), (\alpha+1,0),(\alpha+1,1)\\
% \alpha+1, & {\bf x}=(0,\alpha),(1,\alpha), (\alpha,0),(\alpha,1)\\
%\end{cases}.
%\end{eqnarray*}

Note that 
$$\s_{\mathbb{F}_4}(F;X)=3+5X+4X^\alpha+4X^{\alpha+1}$$
and
\begin{eqnarray}
 \G_{3;F}^{(4)}(X)&=&\G_{3}^{(4)}(X)\cdot \frac{1}{16}\s_{\mathbb{F}_4}(F;X)\\\nonumber
 &=& \left(\frac{5}{16}+\frac{5}{16}X+\frac{3}{16}X^{\alpha}+\frac{3}{16}X^{\alpha +1}\right)\left(\frac{3}{16}+\frac{5}{16}X+\frac{4}{14}X^\alpha+\frac{4}{16}X^{\alpha+1}\right)\\\nonumber
 &=& \frac{1}{4}+\frac{1}{4}X+\frac{1}{4}X^\alpha+\frac{1}{4}X^{\alpha+1}.
\end{eqnarray}
Therefore, $\boldsymbol{e}_{n,3}+F({\bf X})$ is asymptotically balanced over $\mathbb{F}_4$ even though $\boldsymbol{e}_{n,3}$ is not asymptotically balanced and $F({\bf X}) =  X_1^3 X_2^3+X_2^2+X_1^2$ is not balanced over $\mathbb{F}_4$.  This proves that Proposition \ref{PropAsympBalanced} is not true in general.
\end{example}

\begin{example}
 With $\mathbb{F}_4$ as in the previous example.  Consider $\boldsymbol{e}_{n,9}$ and observe that
 \begin{equation}
  \G_9^{(4)}(X)=\frac{45}{128}+\frac{29}{128}X+\frac{27}{128}X^{\alpha}+\frac{27}{128}X^{\alpha +1}.
 \end{equation}
That implies that $A_{4,9}$ is non-singular and so (\ref{matrixprob}) has only the trivial solution.  We conclude that a perturbation $\boldsymbol{e}_{n,9}+F({\bf X})$ is asymptotically balanced if and only if $F({\bf X})$ is balanced over $\mathbb{F}_4$.
\end{example}

%\begin{proof}
% Recall that
% \begin{equation}
%  \G_{k;F}(L;X) = \frac{1}{q^j} \G_{k;F}(L;X)\,\s_{\mathbb{F}_q;L}(F;X).
% \end{equation}

%\end{proof}

%Also, some results for 
%regular exponential sums can be recovered by letting $X=\xi_p$ and $L=\Tr_{\mathbb{F}_q/\mathbb{F}_p}$.  For example, if $F({\bf X}) \in \mathbb{F}_q[X_1,\ldots, X_j]$ and 
%$\mathbbm{p}^{(q)}_{F;k}(t;\Tr)$ represents the probability at infinity that $\Tr(\boldsymbol{e}_{k}({\bf x})+F({\bf x}))$ returns $t\in \mathbb{F}_p$ when the entries of ${\bf x}$ are randomly selected
%from $\mathbb{F}_q$, then $\mathbbm{p}^{(q)}_{F;k}(t;\Tr) = \frac{1}{p}$ for all $t\in\mathbb{F}_p$ if and only if $\G^{(q)}_{k}(\Tr;\xi_p)=0$ or $S_{\mathbb{F}_q}(F({\bf X}))=0$.

\section{Concluding remarks}

In this article we studied the asymptotic behavior of exponential sums of elementary symmetric polynomials and their perturbations over finite fields. One of the purposes of doing so was to explore the veracity of Conjecture \ref{clsconjFq}.  We extended most of the results that appear in \cite{cm2} to arbitrary finite fields.  We also linked the asymptotic behavior of exponential sums of elementary symmetric polynomials and their perturbations to the value distribution of these polynomials over finite fields.  The concept of asymptotically balanced symmetric polynomial (or perturbation) was also extended to general finite fields.  In the particular case of a perturbation $\boldsymbol{e}_{n,k}+F({\bf X})$, we showed that it is asymptotically balanced over $\mathbb{F}_p$ ($p$ prime) if and only if $\boldsymbol{e}_{n,k}$ is asymptotically balanced or $F({\bf X})$ is balanced over $\mathbb{F}_p$.  We also show that this result does not hold in finite fields in general and provided a way to construct counterexamples.

The asymptotic behavior of the exponential sums considered in this work is dominated by a counting problem over a $q-1$-hypercube of length a power of $p$.  Working on this problem over general finite fields can be difficult and counterintuitive.  For example, consider $\boldsymbol{e}_3({\bf X})$ in $\mathbb{F}_8 = \mathbb{F}_2(\alpha)$, with $\alpha^3+\alpha+1=0$.  Then,
 \begin{equation}
  \G_3^{(8)}(X) = \sum_{\beta \in \mathbb{F}_8}\frac{1}{8} X^{\beta},
%  +\frac{1}{8}X+\frac{1}{8}X^{\alpha}+\frac{1}{8}X^{\alpha +1}+\frac{1}{8}X^{\alpha ^2}+\frac{1}{8}X^{\alpha ^2+1}+\frac{1}{8}X^{\alpha ^2+\alpha }+\frac{1}{8} X^{\alpha ^2+\alpha+1}. 
 \end{equation}
i.e. $\mathbbm{p}^{(8)}_3(\beta) = 1/8$ for every $\beta \in \mathbb{F}_8$. %Observe that 
%$$\mathbbm{p}^{(8)}_{31,3}(0)=\frac{281467461435371}{2251799813685248} \text{ and }\mathbbm{p}^{(8)}_{31,3}(\beta)=\frac{281476050321411}{2251799813685248} \text{ with } \beta \neq 0.$$
%These numbers are already very close to 1/8.  
That is quite surprising given that for $\boldsymbol{e}_3({\bf X})$ we have $\mathbbm{p}^{(q)}_{3}(0)>\mathbbm{p}^{(q)}_{3}(\beta)$, $\beta\neq 0$ for $\mathbb{F}_2$ and $\mathbb{F}_4$. 
Furthermore, $\boldsymbol{e}_3({\bf X})$ is asymptotically balanced over $\mathbb{F}_8$, but in this case the degree of the elementary polynomial is not of the form $k=dp^l$ with $1\leq d \leq p-1$.  
Moreover, for the first seven elementary symmetric polynomials, i.e. for $1\leq k \leq 7$, we have
\begin{equation}
\G^{(8)}_{k}(X)=\begin{cases}
  \sum_{\beta \in \mathbb{F}_8}\frac{1}{8} X^{\beta} & k\neq 5, 7\\
 \frac{71}{512}+\sum_{\beta \in \mathbb{F}_8^{\times}}\frac{63}{512} X^{\beta} & k=5\\
 \frac{67}{512}+\frac{67}{512}X+\sum_{\beta\in \mathbb{F}_8\setminus\mathbb{F}_2}\frac{63}{512} X^{\beta} & k=7.
\end{cases}
\end{equation}
This example provides further evidence about the difficulty to determine the veracity of the generalized conjecture of Cusick, Li and St$\check{\mbox{a}}$nic$\check{\mbox{a}}$.
\medskip

\noindent
{\bf Acknowledgments.}  The authors would like to thank Oscar E. Gonz\'alez for reading a previous version of this article.  

\bibliographystyle{plain}

\end{document}